\documentclass[12pt,reqno]{amsart}
\usepackage[a4paper,margin=2.5cm]{geometry}

\usepackage{amsmath,amsthm,amssymb}
\usepackage{amssymb}
\usepackage{amsmath}
\usepackage[all]{xy}
\usepackage{tikz-cd}
\usepackage{tikz}
\usepackage{hyperref}
\usepackage{upgreek}
\usepackage{booktabs}
\usepackage{array}
\usepackage{longtable}
\usepackage{ragged2e}
\usepackage{multirow}
\usepackage{enumitem}
\usepackage[table]{xcolor}
\usepackage{dutchcal}

\renewcommand{\arraystretch}{1.15}
\theoremstyle{plain} 
\newtheorem{theorem}{Theorem}[section]

\newtheorem{lemma}[theorem]{Lemma}
\newtheorem{corollary}[theorem]{Corollary}

\numberwithin{equation}{section}

\theoremstyle{definition} \newtheorem{definition}[theorem]{Definition}
\newtheorem*{definition*}{Definition}
\newtheorem*{question*}{Question}

\theoremstyle{plain}

\newtheorem*{remark}{Remark}

\usepackage[all]{xy}

\newcommand{\ncom}{\newcommand}
\ncom{\mylabel}[1]{{\rm (#1)}\label{#1}}
\ncom{\Hom}{{\textit{Hom}}}
\ncom{\eop}{{\hfill $\Box$}}

\newcommand{\C}{{\mathbb C}}

\def\min{{\rm min}}
\def\card{{\rm card}}

\def\Aut{{\rm Aut}}

\def\Hom{{\rm Hom}}

\def\Kum{{\rm Kum}}

\def\Sp{{\rm Sp}}
\def\SO{{\rm SO}}
\def\SU{{\rm SU}}
\def\Prod{{\rm Prod}}
\def\F{{\rm \mathbb{F}}}

\begin{document}

\title[Classification of products of Hilbert scheme of points on surface]{On the classification of products of Hilbert schemes of points over a surface}

\author[A. Mukherjee]{Arijit Mukherjee}
\address{Department of Mathematics, IIT Madras, Chennai, India}
\email{mukherjee7.arijit@gmail.com}
\author[A. Pahari]{Anubhab Pahari}
\address{Department of Mathematics, IIT Madras, Chennai, India}
\email{anubhabpahari@gmail.com, ma22d012@smail.iitm.ac.in}

\begin{abstract}
Let $S$ be a smooth projective surface over $\mathbb{C}$ and $S^{[n]}$ be the Hilbert scheme of $n$ points over $S$, for any positive integer $n$. Under suitable hypotheses, we show that the products of Hilbert schemes $S^{[{\bf a}]}$ and $S^{[{\bf b}]}$ associated to two distinct partitions ${\bf a}$ and ${\bf b}$ of $n$  are non-isomorphic, using  Betti numbers, Hodge numbers and Euler characteristics of the individual factors.  The classifications obtained using Betti numbers and Euler characteristics extend over $\overline{\F}_q$ as well, where $\F_q$ is the finite field with $q$ elements.  We also provide a complete classification of such product spaces for K3 surfaces using their inherent symplectic structure.
\end{abstract}

\keywords{Hilbert scheme of points over a surface, irreducible symplectic manifold, Betti number, Hodge polynomial, coloured partition, majorization, K3 surface}

\subjclass[2020]{14C05, 14C30, 14Q10, 14Q15, 14J28,  05A17.}

\maketitle

\tableofcontents

\section{Introduction}\label{Section 1}
Let $S$ be a smooth projective surface over $\mathbb{C}$. We denote the Hilbert scheme of $n$ points over a surface $S$ by $S^{[n]}$. It is a smooth projective variety over $\mathbb{C}$ of dimension $2n$ and is a (crepant) resolution of the singularities of the $n^{\text{th}}$ symmetric product $S^{(n)}$ (cf. \cite{MR237496}).

A \textit{partition} of a positive integer $n$ is a tuple ${\bf a}=(n_1,\ldots,n_r)$ of positive integers such that $n_1\geq \cdots \geq n_r$ and $\sum_{i=1}^r n_i=n$. We define \textit{the Hilbert scheme of points over $S$ associated to ${\bf a}$}, denoted by $S^{[{\bf a}]}$, as:
\begin{equation*}
    S^{[{\bf a}]}:= S^{[n_1]}\times \cdots \times S^{[n_r]}.
\end{equation*} 
In this paper, we focus on the following:
\begin{question*}\label{main question}
 Let ${\bf a}$ and ${\bf b}$ be two distinct partitions of a positive integer $n$. Is it true that the $2n$-dimensional schemes $S^{[{\bf a}]}$ and $S^{[{\bf b}]}$ are non-isomorphic ?   
\end{question*}

The above question is completely resolved for the curve case (cf. \cite{MR4833710}). 

 In Section \ref{Section: classification for product of K3s}, we give a complete answer when $S$ is a K3 surface.  We use the \textit{Beauville decomposition} theorem to obtain the following result (cf. Theorem \ref{Classification of product of hilb scheme of K3} and Theorem \ref{aut gp of prod of hilb for k3}).
\begin{theorem}\label{Zeroth main theorem_Introduction}
    Let $S$ be a $K3$ surface.  Then, 
    \begin{enumerate}
        \item $S^{[{\bf a}]}$ is isomorphic to $ S^{[{\bf b}]}$ if and only if ${\bf a}={\bf b}$.
        \item $\operatorname{Aut}(S^{[\bf a]}) \cong \prod_{i=1}^r \Big({\operatorname{Aut}(S^{[n_i]})}^{\nu_{{\bf a}}(n_i)} \rtimes \mathfrak{S}_{\nu_{{\bf a}}(n_i)}\Big),$
        where $\nu_{{\bf a}}(n_i)$ is the multiplicity of $n_i$ in ${\bf a}=(n_1,\ldots,n_r)$.
    \end{enumerate}
\end{theorem}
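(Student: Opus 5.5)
We will prove both parts using the Beauville--Bogomolov decomposition, applied to the simply connected compact Kähler manifold $S^{[{\bf a}]}$ with trivial canonical bundle. For a K3 surface $S$ and every $m\ge 1$, Beauville's theorem says that $S^{[m]}$ is an irreducible symplectic (hyperkähler) manifold of dimension $2m$: it is simply connected and $H^0(S^{[m]},\Omega^2)$ is spanned by an everywhere non-degenerate form $\sigma_m$. Hence $S^{[{\bf a}]}=\prod_i S^{[n_i]}$ is already written as a product of irreducible symplectic factors and has no flat (torus) or Calabi--Yau part. The key input is the uniqueness of the de Rham/Beauville decomposition: for a compact simply connected Kähler manifold with $c_1=0$, the decomposition into irreducible factors is unique up to order and isomorphism of the factors. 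This uniqueness follows from the uniqueness of the de Rham splitting of the Ricci-flat metric in each Kähler class, where the holonomy decomposes as $\prod \Sp(n_i)$.

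For part (1), suppose $S^{[{\bf a}]}\cong S^{[{\bf b}]}$. By uniqueness, the multisets of irreducible factors agree up to isomorphism. In particular, the multisets of their dimensions $\{2n_i\}$ and $\{2m_j\}$ coincide, so ${\bf a}={\bf b}$ as partitions. As an alternative purely cohomological route, which avoids invoking metric uniqueness, we can use $H^{2,0}$. By Künneth, $h^{2,0}(S^{[{\bf a}]})=r$, the number of parts. The space $H^{2,0}$ is spanned by the pullbacks $p_i^*\sigma_{n_i}$. A generic element $\sum c_i p_i^*\sigma_{n_i}$ with all $c_i\neq 0$ is non-degenerate. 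The rank of a form $\sum c_ip_i^*\sigma_{n_i}$ equals $\sum_{c_i\ne0}2n_i$, so the set of possible ranks of holomorphic $2$-forms determines the subset sums. More cleanly, the minimal nonzero ranks, iterated, recover the lines $\C p_i^*\sigma_{n_i}$, since these are exactly the forms whose rank cannot be decreased by a nontrivial linear combination. Together with their ranks $2n_i$, this recovers ${\bf a}$ intrinsically.

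For part (2), we use the same structure. Any automorphism $f$ acts on $H^{2,0}$ and preserves rank, hence it permutes the lines $\C p_i^*\sigma_{n_i}$. It therefore permutes the kernel foliations $\ker(p_i^*\sigma_{n_i})$, which are the tangent spaces to the fibres of the projections $\prod_{j\ne i}$. Consequently $f$ permutes the factors of the product decomposition, sending factor $i$ to a factor of the same dimension. Composing with an element of $\prod\mathfrak{S}_{\nu_{\bf a}(n_i)}$ that permutes equal factors, we reduce to the case where $f$ preserves each foliation. Then $f$ maps fibres of each projection to fibres, so it descends to an automorphism $f_i$ of each $S^{[n_i]}$ with $f=\prod f_i$. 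Here we use that $S^{[n_i]}$ is compact and that the fibration structure is preserved; the factor maps are holomorphic because $f_i=p_i\circ f\circ s_i$ for a section $s_i$. This yields a surjection from $\prod_i\big(\Aut(S^{[n_i]})^{\nu}\rtimes\mathfrak{S}_\nu\big)$, and injectivity is clear, giving the wreath-product formula.

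The main obstacle is the rigidity step: showing that an abstract isomorphism must respect the product decomposition. I would handle it by the pointwise rank argument on $H^{2,0}$ above, which needs the linear-algebra fact that the lines $\C p_i^*\sigma_{n_i}$ are intrinsically characterized. A degenerate form $\sum c_i p_i^*\sigma_{n_i}$ has kernel equal to the sum of the tangent spaces of the factors with $c_i=0$, so the "indecomposable" forms, namely those whose kernel is maximal among nonzero forms in no sum, pick out single factors. If this becomes messy, I would fall back on citing the uniqueness of the Beauville decomposition directly.
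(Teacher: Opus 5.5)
Your proposal is correct. For part (1), your primary route is the paper's own: Yau's theorem plus uniqueness of the Beauville--de Rham splitting, followed by a comparison of dimensions. Your cohomological argument, used for (2) and offered as an alternative for (1), is a genuinely different route. Instead of Ricci-flat metrics and holonomy, you read off the product structure from the canonical space $H^0(X,\Omega^2_X)=\bigoplus_i \C\,p_i^*\sigma_{n_i}$ and from the pointwise kernels $\bigoplus_{c_i=0}T_i$ of its elements.

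One step should be tightened. The inference ``$f$ preserves rank, hence permutes the lines'' is too quick, because the lines are not rank level sets. For ${\bf a}=(2,1,1)$, the forms $p_1^*\sigma_{2}$ and $p_2^*\sigma_{1}+p_3^*\sigma_{1}$ both have rank $4$. Use instead the criterion from your last paragraph: a nonzero $\omega$ lies on some $\C\,p_i^*\sigma_{n_i}$ if and only if no nonzero holomorphic $2$-form has strictly larger kernel. This criterion is respected by $f^*$, because $\ker(f^*\omega)_x=(df_x)^{-1}\ker\omega_{f(x)}$ and the inclusion pattern of these kernels is the same at every point. Hence $f^*$ permutes the lines while preserving the ranks $2n_i$. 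Your descent step then finishes the proof: after composing with a permutation of equal factors, $f$ maps fibres of each $p_i$ onto fibres of $p_i$, and $f_i=p_i\circ f\circ s_i$ is holomorphic, with inverse obtained in the same way from $f^{-1}$.

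Your route avoids the Calabi--Yau theorem and de Rham uniqueness altogether. It needs no choice of Kähler class or metric, so it never has to address whether an automorphism pulls the chosen Ricci-flat metric back to another product metric. It also makes explicit why an automorphism inducing the trivial permutation is a product of automorphisms of the factors; the paper leaves this implicit when it identifies the kernel of $\Aut(S^{[{\bf a}]})\to\prod\mathfrak{S}_{\nu_{\bf a}(n_i)}$.

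The paper's route is shorter once the decomposition theorem is granted. It also applies verbatim to products containing strict Calabi--Yau factors, where $h^{2,0}=0$ and your argument has nothing to detect. Yours, in turn, works for any product of compact Kähler manifolds with $h^{1,0}=0$ and $H^{2,0}$ spanned by a single symplectic form, which covers the generalised Kummer case of the paper's remark as well.
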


An analogous result holds for products of generalised Kummer varieties as well (cf. Remark on p.~4-5).

In Section \ref{Section: classification of product for any surface}, we deal with the classification problem for any smooth projective surface $S$.  An effective approach for such problems is to use invariants of $S^{[n]}$ that behave well under products.  We use the generating functions for  Betti numbers and Hodge numbers (cf. \cite{thesis}, \cite{MR1612677}, \cite{MR1032930} and \cite{MR1312161}) to obtain the following result (cf. Theorem \ref{thm: main theorem for diff lengths}, Theorem \ref{thm: same length with b0(S) ge 1}, Theorem \ref{thm: main theorem for same length for b_0(S)=1 and b_1(S)=0} and Theorem \ref{thm:classification_same length_condition involves first betti number and minimal part}):
\begin{theorem}\label{Introduction_main theorems together}
    Let ${\bf a}=(n_1,\dots,n_r)$ and ${\bf b}=(m_1,\ldots,m_s)$ be two distinct partitions of a positive integer $n$ of length $r$ and $s$ respectively. Then $S^{[{\bf a}]}$ is not isomorphic to $S^{[{\bf b}]}$ in each of the following cases:
\begin{enumerate}
    \item $b_0(S)=1$ and one of the following holds:
    \begin{enumerate}
        \item $b_1(S)=0$, $b_2(S)\ne \tfrac{\nu_{{\bf b}}(1)-\nu_{{\bf a}}(1)}{s-r}-1$ and $r< s$.
        \item $b_1(S)=0$, $r=s$ and $\nu_{{\bf a}}(1)\ne \nu_{{\bf b}}(1)$.
        \item $b_1(S)>0$, $r<s$. 
        \item $b_1(S)\ge 2(\min\{n_j,m_j\}+1)$, $j$ being the largest integer such that $n_{j}\ne m_{j}$ and $r=s$.
        \end{enumerate}
    \item $b_0(S)>1$ and one of the following holds:
    \begin{enumerate}
        \item $r<s$ and ${\bf b}$ is a refinement of ${\bf a}$ (cf. Definition \ref{def: refinement}).
        \item $r=s$ and ${\bf b}$ strictly majorizes ${\bf a}$ (cf. Definition  \ref{Definition_majorization}).
    \end{enumerate}
\end{enumerate}
\end{theorem}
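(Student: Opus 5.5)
The plan is to compare low-degree Betti numbers of $S^{[{\bf a}]}$ and $S^{[{\bf b}]}$. These are isomorphism invariants, and by the Künneth formula the Poincaré polynomial of $S^{[{\bf a}]}$ is $\prod_i P(S^{[n_i]},t)$. Göttsche's generating function
\[
\sum_{n\ge 0} P(S^{[n]},t)\,q^n=\prod_{k\ge1}\prod_{i=0}^{4}\bigl(1-(-1)^i t^{2k-2+i}q^k\bigr)^{-(-1)^i b_i(S)}
\]
gives the low-degree Betti numbers of each $S^{[m]}$ explicitly. The first step is to record, for $m\ge1$, the following values. We have $b_0(S^{[m]})=\binom{b_0(S)+m-1}{m}$, which counts multisets of $m$ connected components. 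When $b_0(S)=1$ we have $b_1(S^{[m]})=b_1(S)$ and $b_2(S^{[m]})=b_2(S)+\binom{b_1(S)}{2}+1$ for $m\ge2$, while $b_2(S^{[1]})=b_2(S)$. More generally, $b_k(S^{[m]})$ stabilises once $m$ is large compared with $k$: the $t$-degree $2k-2+i$ attached to $q^k$ forces stabilisation for $k<2(m+1)$ roughly, and below that range the dependence on $m$ is detectable. After that, we compare the product invariants case by case.

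Case (1)(a),(b), where $b_0=1$ and $b_1=0$. Here $b_1$ of every factor vanishes, so $b_2(S^{[{\bf a}]})=\sum_i b_2(S^{[n_i]})=r(b_2(S)+1)-\nu_{\bf a}(1)$. Equating this with the corresponding expression for ${\bf b}$ gives $(s-r)(b_2(S)+1)=\nu_{\bf b}(1)-\nu_{\bf a}(1)$. In case (a) this is excluded by hypothesis, and in case (b) it forces $\nu_{\bf a}(1)=\nu_{\bf b}(1)$. Case (1)(c), where $b_1>0$: here $b_1(S^{[{\bf a}]})=r\,b_1(S)$, so $r<s$ already distinguishes the two spaces. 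Case (1)(d) is the delicate one. When $r=s$ but $b_1(S)$ is large, I will look at the Poincaré polynomials modulo high degree, equivalently at $\prod_i P(S^{[n_i]},t)$ truncated in $t$. Write $P(S^{[m]},t)=P_\infty(t)-E_m(t)$, where $P_\infty$ is the stable series and $E_m$ starts in degree roughly $2(m+1)$, with leading coefficient governed by $b_1(S)$ through the $q^{m+1}$ term. Since $\sum n_i=\sum m_i$, the part of each product below the first discrepancy agrees. I then compare $\prod_i(P_\infty-E_{n_i})$ with $\prod_i(P_\infty-E_{m_i})$ in the lowest degree where they can differ; this degree is controlled by $\min\{n_j,m_j\}$ for $j$ the largest index with $n_j\ne m_j$, since the smaller parts coincide. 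The hypothesis $b_1(S)\ge 2(\min+1)$ is then used to ensure that the leading coefficient of $E_{\min}$ is nonzero and is not cancelled by the remaining terms. I expect this to be the main obstacle. Making the cancellation argument precise requires identifying the exact first degree in which $E_m$ is nonzero; I will try to show it is $b_{m+1}$, coming from $\Lambda^{m+1}H^1$-type classes, and that this class is nonzero exactly when $b_1(S)\ge m+1$, with the factor of 2 arising from degree bookkeeping.

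Case (2), where $b_0(S)=c>1$. I use $b_0(S^{[{\bf a}]})=\prod_i\binom{c+n_i-1}{n_i}$. For (a), when ${\bf b}$ refines ${\bf a}$, each part $n_i$ splits into parts $m_{i1},\dots,m_{ik}$, and the strict inequality $\binom{c+n-1}{n}<\prod_l\binom{c+m_l-1}{m_l}$ for a nontrivial splitting of $n$ gives $b_0(S^{[{\bf a}]})<b_0(S^{[{\bf b}]})$. This inequality follows by induction from $\binom{c+x+y-1}{x+y}<\binom{c+x-1}{x}\binom{c+y-1}{y}$, which is an injection/counting statement about multisets of components. For (b), the function $f(m)=\log\binom{c+m-1}{m}$ is strictly concave in $m$, since the ratio $f(m)-f(m-1)=\log\frac{c+m-1}{m}$ is strictly decreasing. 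By Karamata's inequality, if ${\bf b}$ strictly majorizes ${\bf a}$ with equal length, then $\sum f(m_i)<\sum f(n_i)$, so the $b_0$ values differ.
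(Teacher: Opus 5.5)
Your cases (1)(a)--(c) and (2)(a),(b) are correct and follow the paper. You use $b_2$ and $b_1$ via K\"unneth, and $b_0(S^{[m]})=\binom{b_0(S)+m-1}{m}$ with a strict splitting inequality and strict discrete log-concavity. The paper instead inducts on $p$ via $\binom{n+p}{p}=\binom{n+p-1}{p-1}\frac{n+p}{p}$ and uses Robin Hood transfers. Those transfers are also the cleanest way to make your Karamata step strict for a function defined only on integers.

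The gap is (1)(d), the one non-routine case. You leave it as a plan, and the plan's specifics are wrong:
\begin{enumerate}
\item The first discrepancy between $P(S^{[m]},t)$ and the stable series is in degree $m+1$, not ``roughly $2(m+1)$''.
\item Its coefficient is not the $\Lambda^{m+1}H^1$ count $\binom{b_1}{m+1}$, because the factor $(1-t^2q^2)^{-1}$ (the $b_0$-term at $k=2$) also contributes. So your target ``nonzero exactly when $b_1(S)\ge m+1$'' is false, although the direction you need is true.
\item The factor $2$ in the hypothesis does not come from degree bookkeeping. In the paper it is $b_1=2h^{1,0}$.
\end{enumerate}
For (1)(d) the paper switches from Betti numbers to the Hodge numbers $h^{p,0}$, whose generating series collapses to $(1+xq)^{h^{1,0}}(1-q)^{-1}(1-x^2q)^{-h^{2,0}}$. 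Hence $h^{p,0}(S^{[n]})$ is independent of $n\ge p$ and jumps by exactly $\binom{h^{1,0}}{n+1}$ at $p=n+1$. K\"unneth for $h^{p,0}$ then shows that $h^{N,0}$ of the two products differ by $k\binom{h^{1,0}}{N}$, where $N=\min\{n_e,m_e\}+1$ and $k\ge 1$.

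Your own route can be completed, and it is then actually stronger. With $b_0(S)=1$, write G\"ottsche's series as $(1-q)^{-1}F(t,q)$.
\begin{enumerate}
\item We have $2k-2+i\ge k$ unless $(k,i)=(1,0)$, with equality only for $(k,i)=(1,1),(2,0)$. So every monomial $t^aq^l$ of $F$ has $a\ge l$, and those with $a=l$ come only from $(1+tq)^{b_1}(1-t^2q^2)^{-1}$.
\item Consequently $P(S^{[m]},t)$ agrees with the stable series $P_\infty(t)=F(t,1)$ modulo $t^{m+1}$. Moreover $P_\infty-P(S^{[m]},t)\equiv c_{m+1}t^{m+1}$ modulo $t^{m+2}$, where $c_N=\sum_{l\ge 0}\binom{b_1(S)}{N-2l}$.
\item Every factor has constant term $1$. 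So reducing the two K\"unneth products modulo $t^{N+1}$ shows that $b_N(S^{[{\bf a}]})$ and $b_N(S^{[{\bf b}]})$ differ by exactly $k\,c_N$ for some $k\ge 1$. There is no cancellation to control.
\item Under your hypothesis $c_N\ge\binom{b_1}{N}>0$, which settles (1)(d).
\end{enumerate}
In fact $c_N>0$ as soon as $b_1(S)\ge 1$ or $N$ is even. Since only Betti numbers are used, the argument also works over $\overline{\F}_q$, where the paper's Hodge-theoretic proof of (1)(d) does not apply. None of this computation is in your proposal, however, so as it stands (1)(d) is unproved.
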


Next, we observe that the Euler characteristic $\chi(S^{[n]})$ is the same as the number of partitions of $n$ obtained using $\chi(S)$ many colours (cf. \eqref{28}). This identification allows us to apply the results of \cite{bringmann}, from which we obtain the following (cf. Theorem \ref{Classification_same length case_using majorization}):
\begin{theorem}\label{Fourth main theorem_Introduction}
 Let ${\bf a}$ and ${\bf b}$ be two partitions of a positive integer $n$, of the same length, such that ${\bf b}$ strictly majorizes ${\bf a}$ (cf. Definition  \ref{Definition_majorization}). Then 
 $S^{[{\bf a}]}$ is not isomorphic to $S^{[{\bf b}]}$, when $\chi(S)\geq 3$.
\end{theorem}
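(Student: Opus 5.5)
The plan is to separate $S^{[{\bf a}]}$ and $S^{[{\bf b}]}$ by their topological Euler characteristic, which is an isomorphism invariant. Since $\chi$ is multiplicative on products,
\[
\chi\big(S^{[{\bf a}]}\big)=\prod_{i=1}^r \chi\big(S^{[n_i]}\big), \qquad \chi\big(S^{[{\bf b}]}\big)=\prod_{i=1}^r \chi\big(S^{[m_i]}\big),
\]
where ${\bf b}=(m_1,\ldots,m_r)$ has the same length $r$. By G\"ottsche's formula,
\[
\sum_{n\ge 0}\chi\big(S^{[n]}\big)t^n=\prod_{k\ge1}(1-t^k)^{-\chi(S)},
\]
so $\chi(S^{[n]})=p_k(n)$ is the number of partitions of $n$ using $k=\chi(S)$ colours; this is the identification \eqref{28}. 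The statement then reduces to a purely combinatorial claim: for $k\ge 3$, if ${\bf b}$ strictly majorizes ${\bf a}$ (same length, same sum), then $\prod_i p_k(n_i)\neq \prod_i p_k(m_i)$. The natural target is the strict inequality $\prod_i p_k(n_i)>\prod_i p_k(m_i)$.

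The key input is a strict log-concavity property of $p_k$, of the kind established in \cite{bringmann}: for $k\ge 3$,
\[
p_k(n)^2>p_k(n-1)\,p_k(n+1)
\]
for all $n\ge1$, possibly with a finite, explicitly checkable list of exceptions. Strict log-concavity of the positive sequence $(p_k(n))_{n\ge0}$ says that $f(n)=\log p_k(n)$ is strictly concave on the integers.

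With this in hand, I would apply the standard Karamata-type argument. Since ${\bf b}$ strictly majorizes ${\bf a}$ and both have the same length and sum, ${\bf a}$ is reached from ${\bf b}$ by a finite sequence of Robin Hood transfers. Each transfer replaces a pair $(x,y)$ with $x>y+1$ by $(x-1,y+1)$. Strict concavity of $f$ gives $f(x-1)+f(y+1)>f(x)+f(y)$, because the increments $f(j)-f(j-1)$ are strictly decreasing. Hence $\sum_i f(n_i)$ strictly exceeds $\sum_i f(m_i)$, and exponentiating shows the Euler characteristics differ. Thus the two spaces are not homeomorphic, and in particular not isomorphic. Note also that $\chi(S)\ge 3$ makes $p_k(n)>0$, so the logarithms are legitimate.

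I expect the main obstacle to be the precise form of the log-concavity available from \cite{bringmann} for $k\ge 3$: whether it holds for all $n$, and whether it is strict at small $n$. Small cases can be checked by hand. For example, with $p_k(0)=1$, $p_k(1)=k$ and $p_k(2)=k(k+3)/2$, the inequality $k^2>k(k+3)/2$ requires $k>3$, so $n=1$ fails when $k=3$. This is exactly where the transfer argument breaks, namely a transfer $(2,0)\to(1,1)$. However, parts are positive integers, so only $y\ge1$ ever occurs, and the needed inequalities concern $n\ge2$. I would therefore check that the result of \cite{bringmann} covers $n\ge2$ for $k\ge3$, treating any remaining small $n$ by direct computation, and then restrict the concavity argument to $f$ on $\{1,2,\ldots\}$.
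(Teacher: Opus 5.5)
Your proposal is correct and follows the paper's route. Like the paper, you use multiplicativity of $\chi$, G\"ottsche's formula giving $\chi(S^{[n]})=p_{\chi(S)}(n)$, and the strict inequality $p_k({\bf a})>p_k({\bf b})$ for $k\ge 3$; the paper simply quotes that inequality as \cite[Theorem 1.4]{bringmann} (Lemma \ref{majorization inequality}), whereas you re-derive it from strict log-concavity of $p_k$ for $n\ge 2$ via Robin Hood transfers (the same template as Lemma \ref{lem: majorization lemma}). Your observation that the $k=3$, $n=1$ equality $p_3(1)^2=p_3(0)p_3(2)$ is harmless is also right, since all entries stay $\ge 1$ along the transfers.
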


Although we present our results for surfaces over $\mathbb{C}$, most of those hold for any smooth projective surface over $\overline{\F}_q$ as well, where $\F_q$ is the finite field with $q$ elements (cf. Remark \ref{Remark_validity over alg closed fields over finite fields}).

The following table gives examples of surfaces for which Theorem \ref{Introduction_main theorems together} and Theorem \ref{Fourth main theorem_Introduction} hold.

\begin{longtable}{c p{5cm} cccc}
\caption{Betti numbers and Euler characteristics of smooth projective surfaces}
\label{Table}\\

\toprule
Kodaira \\ dimension & Surface type & $b_0(S)$ & $b_1(S)$ & $b_2(S)$ & $\chi(S)$ \\
\midrule
\endfirsthead

\multicolumn{6}{c}{\tablename\ \thetable\ -- continued from previous page} \\
\toprule
Kodaira \\ dimension & Surface type & $b_0(S)$ & $b_1(S)$ & $b_2(S)$ & $\chi(S)$ \\
\midrule
\endhead

\midrule
\multicolumn{6}{r}{Continued on next page} \\
\endfoot

\bottomrule
\endlastfoot

\multirow{4}{*}{$-\infty$}
& Del Pezzo surface of degree $d$, $1\!\le\!d\!\le\!9$
  & 1 & 0 & $10\!-\!d$ & $12\!-\!d$ \\[2pt]

& Hirzebruch surface $F_n=\mathbb{P}_{\mathbb{P}^1}(\mathcal{O}\!\oplus\!\mathcal{O}(n))$, $n\ge 1$
  & 1 & 0 & 2 & 4 \\[2pt]

& Rational elliptic surface $E(1)$
  & 1 & 0 & 10 & 12 \\[2pt]

& Ruled surface over a curve of genus $g$
  & 1 & $2g$ & 2 & $4(1\!-\!g)$ \\[2pt]

\midrule

\multirow{4}{*}{$0$}
& K3 surface & 1 & 0 & 22 & 24 \\
& Enriques surface & 1 & 0 & 10 & 12 \\
& Abelian surface & 1 & 4 & 6 & 0 \\
& Bielliptic surface & 1 & 2 & 2 & 0 \\

\midrule

\multirow{3}{*}{$1$}
& Elliptic surface over genus $g$ curve, $\chi(\mathcal O_S)=1$
  & 1 & $2g$ & $4g+10$ & 12 \\
& Elliptic surface over genus $g$ curve, $\chi(\mathcal O_S)=2$
  & 1 & $2g$ & $4g+22$ & 24 \\
& Elliptic surface $E(n)$, $n\ge 3$ (base $\mathbb{CP}^1$)
  & 1 & 0 & $12n\!-\!2$ & $12n$ \\[2pt]

\midrule

\multirow{2}{*}{$2$}
& $C_1 \times C_2$, $\text{genus}(C_i)=g_i>1$
  & 1 & $2(g_1+g_2)$ & $2+4g_1g_2$ & $4(1-g_1)(1-g_2)$ \\
& Quintic surface in $\mathbb{CP}^3$
  & 1 & 0 & 53 & 55 \\

\end{longtable}

It is clear from Theorem \ref{Introduction_main theorems together} and Theorem \ref{Fourth main theorem_Introduction} that our objective remains unsolved in a few cases.  For example, we cannot conclude whether $S^{[{\bf{a}}]}$ and $S^{[{\bf{b}}]}$ are non-isomorphic when ${\bf a}$ and ${\bf b}$ are not comparable through refinement for different lengths or majorization for the same lengths, for surfaces $S$ with $b_0(S)>1$. Nevertheless, we expect an affirmative answer, as it has been verified computationally in several cases. In this regard, we provide the following combinatorial conjecture, which originally appeared in \cite[Section 4]{bringmann}. \\
\textbf{Conjecture} (\textit{combinatorial version}): Let ${\bf a}$ and ${\bf b}$ be two distinct partitions of the same length of a positive integer $n$.  Then $p_k({\bf b}) \ne p_k({\bf a})$ for all $k\ge 4$.

A direct translation of this conjecture into the context of this article, via equation \eqref{28}, implies that for any two distinct partitions ${\bf a}$ and ${\bf b}$, of the same length, of a positive integer $n$, the $2n$-dimensional schemes $S^{[{\bf a}]}$ and $S^{[{\bf b}]}$ are not isomorphic if $\chi(S)\ge 4$.  However, following Theorem \ref{Introduction_main theorems together} and Theorem \ref{Fourth main theorem_Introduction},  we believe that a much stronger statement holds.  We formulate this as a conjecture.\\
\textbf{Conjecture} (\textit{geometric version}): Let ${\bf a}$ and ${\bf b}$ be two distinct partitions of a positive integer $n$. Then, $S^{[{\bf a}]}$ and $S^{[{\bf b}]}$ are not isomorphic.

%
\section{Classification of products of Hilbert schemes of points over a K3 surface}\label{Section: classification for product of K3s}  

It is well known that if $X$ is a simply connected compact K\"ahler manifold with vanishing Ricci curvature, then $X$ splits, uniquely up to K\"ahler isomorphism and reordering, as a product (cf. \cite[Theorem 1, p. 759]{Beauville}):
\begin{equation}\label{explicit form of the decomposition}
    \mathbb{C}^k \times \prod_i V_i \times \prod_j X_j\;,
\end{equation}
where, 
\begin{enumerate}[label=\alph*.]
\item $\mathbb{C}^k$ is equipped with standard K\"ahler metric.
\item Each $V_i$ is a simply connected compact K\"ahler manifold of real dimension $2m_i$ with holonomy group $\SU(m_i)\subset \SO(2m_i)$.
\item Each $X_j$ is a simply connected compact K\"ahler manifold of real dimension $4r_j$ with holonomy group $\Sp(r_j)\subset \SO(4r_j)$.
\end{enumerate}

In this section, by $S$ we mean a K3 surface. It is well known that $S^{[n]}$ is an irreducible symplectic manifold (cf. \cite[\S 2.2, p.~78]{Huybrechts}). Therefore, it has holonomy group $\Sp(n)$ (cf. \cite[Proposition 4, p.~763]{Beauville}) and $c_1(S^{[n]})=0$. Consequently, $S^{[\bf a]}$ is a simply connected compact K\"ahler manifold with $c_1(S^{[{\bf a}]})=0$.
\begin{theorem}\label{Classification of product of hilb scheme of K3}
    Let ${\bf a}=(n_1,\ldots,n_r)$ and ${\bf b}=(m_1,\ldots,m_s)$ be two  partitions of a positive integer $n$.  Then $S^{[{\bf a}]}$ is isomorphic to $ S^{[{\bf b}]}$ if and only if ${\bf a}={\bf b}$.
\end{theorem}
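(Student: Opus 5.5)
The "if" direction is trivial. For the converse, I will use the uniqueness part of the Beauville decomposition \eqref{explicit form of the decomposition}. Suppose $\varphi\colon S^{[{\bf a}]}\to S^{[{\bf b}]}$ is an isomorphism.

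First, I will make both sides carry the required metrics. Fix a K\"ahler class on each factor $S^{[n_i]}$. By Yau's theorem there is a unique Ricci-flat K\"ahler metric in that class, and its holonomy is exactly $\Sp(n_i)$ by \cite[Proposition 4, p.~763]{Beauville}. The product metric on $S^{[{\bf a}]}$ is then Ricci-flat, and this product is literally a Beauville decomposition of $S^{[{\bf a}]}$ with:
\begin{itemize}
\item $k=0$,
\item no $V_i$ factors,
\item symplectic factors $X_i=S^{[n_i]}$ of real dimension $4n_i$, one for each $i$.
\end{itemize}
Next, pull back a product Ricci-flat metric $g_{\bf b}$ from $S^{[{\bf b}]}$ via $\varphi$. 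This gives a second Ricci-flat K\"ahler structure on the same complex manifold, decomposed as a product of $S^{[m_j]}$'s with holonomies $\Sp(m_j)$.

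The main obstacle is that Beauville's uniqueness is stated for a fixed Ricci-flat K\"ahler metric, whereas here we compare two different metrics on $S^{[{\bf a}]}$. I would get around this by noting that the decomposition is read off from the holonomy representation. The multiset $\{r_j\}$ is determined by the splitting of $T_x$ into irreducible holonomy summands, and this splitting corresponds to the decomposition of $H^{2,0}=H^0(\Omega^2)$.

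To make this concrete I would use an intrinsic invariant instead. Parallel forms are harmonic, and $H^0(\Omega^\bullet)$ consists of parallel forms for any Ricci-flat metric (Bochner). So the algebra $H^0(S^{[{\bf a}]},\Omega^\bullet)$ is a biholomorphic invariant. By K\"unneth and Beauville it equals
\[
\bigotimes_i \C[\sigma_i]/(\sigma_i^{n_i+1}),
\]
where $\sigma_i$ is the symplectic form on the $i$-th factor and has degree $2$. The key step is to recover the partition from this graded algebra.

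Here I would use $H^{2,0}$, which has dimension $r$, together with the set $\mathcal{N}$ of elements $\omega\in H^{2,0}$ satisfying $\omega^2=0$. The ring $\bigotimes_i \C[\sigma_i]/(\sigma_i^{n_i+1})$ is a graded Artinian ring generated in degree $2$. Its isomorphism type as a graded algebra determines the multiset $\{n_i\}$. To see this, look at the Hilbert series
\[
\prod_i \bigl(1+t+\dots+t^{n_i}\bigr), \qquad t=\deg 2 .
\]
The polynomial $[n+1]_t=1+t+\dots+t^{n}$ factors into cyclotomic polynomials. From the product of such polynomials the multiset $\{n_i\}$ can be recovered by peeling off the largest: the largest root-of-unity order appearing is $n_1+1$, so divide by $[n_1+1]_t$ and induct.

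Since the Hilbert series of $H^0(\Omega^{2\bullet})$ is a biholomorphic invariant, ${\bf a}={\bf b}$ follows. This makes the metric-comparison issue unnecessary.

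Alternatively, and more simply, the Hodge numbers $h^{p,0}$ alone already determine the partition by this cyclotomic peeling argument. I would present that version. The only ingredients are:
\begin{itemize}
\item $h^{2p,0}(S^{[m]})=1$ for $0\le p\le m$ and $h^{q,0}(S^{[m]})=0$ otherwise, which holds because $S^{[m]}$ is irreducible symplectic;
\item the K\"unneth formula.
\end{itemize}
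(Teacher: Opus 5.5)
Your final argument is correct, and it takes a genuinely different route from the paper's.

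\textbf{The paper's route.} The paper pulls back the Ricci-flat metric of $S^{[{\bf b}]}$ along $f$ and invokes uniqueness of the decomposition \eqref{explicit form of the decomposition}. This matches the factors $S^{[n_i]}$ with the $S^{[m_j]}$, and the proof finishes by comparing dimensions. The ``main obstacle'' you flagged is exactly the step the paper leaves implicit. Within that route it can be closed as follows. Since $H^1(S^{[n_i]})=0$, K\"unneth shows that every K\"ahler class on $S^{[{\bf a}]}$ is a sum of K\"ahler classes pulled back from the factors. By Calabi's uniqueness, $f^*g$ is then the product of the Ricci-flat metrics on the $S^{[n_i]}$, so its decomposition really is $\prod_i S^{[n_i]}$.

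\textbf{Your route.} You bypass metrics entirely. You use $\sum_p h^{p,0}(S^{[m]})x^p=1+x^2+\cdots+x^{2m}$; this also follows directly from \eqref{generating series for h(p,0)} with $h^{1,0}(S)=0$ and $h^{2,0}(S)=1$, so no holonomy input is needed. K\"unneth then gives $\sum_p h^{p,0}(S^{[{\bf a}]})x^p=\prod_i [n_i+1]_{x^2}$. Cyclotomic peeling recovers the multiset $\{n_i\}$: the largest $d$ with $\Phi_d$ dividing the product is $\max_i(n_i+1)$, you cancel that factor in the domain $\C[t]$, and induct. Each factor is nonconstant because $n_i\ge 1$, so the induction terminates.

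\textbf{Comparison.} Your argument is more elementary and fits the invariant-based strategy of Section~\ref{Section: classification of product for any surface}. It also proves more: since the $h^{p,0}$ are birational invariants, $S^{[{\bf a}]}$ and $S^{[{\bf b}]}$ are not even birational when ${\bf a}\neq{\bf b}$. It applies verbatim to products of generalised Kummer varieties. What the paper's route buys is rigidity of the factors themselves, which it reuses to compute $\operatorname{Aut}(S^{[{\bf a}]})$ in Theorem~\ref{aut gp of prod of hilb for k3}; a numerical invariant alone cannot give that. Finally, your detour through the set $\mathcal{N}$ and the holonomy summands of $T_x$ is never used and can be dropped.
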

\begin{proof}
   The converse is trivial, so we only prove the forward direction. Recall that an isomorphism between projective varieties implies a biholomorphism in the complex analytic category. Hence, for a biholomorphism  $f: S^{[{\bf a}]} \rightarrow S^{[{\bf b}]}$, $(S^{[{\bf a}]}, f^{\ast}g_{S^{[{\bf b}]}},f^{\ast}\omega_{S^{[{\bf b}]}})$ and $(S^{[{\bf b}]},g_{S^{[{\bf b}]}},\omega_{S^{[{\bf b}]}})$ are isomorphic as K\"ahler manifolds, where $g_{S^{[{\bf b}]}}$ is the unique Ricci-flat K\"ahler metric (cf. \cite[Theorem 2, p. 364]{Yau}) associated with a K\"ahler form $\omega_{S^{[{\bf b}]}}$ on $S^{[{\bf b}]}$.  

   Therefore, by \eqref{explicit form of the decomposition} we obtain $r=s$ and $S^{[n_i]}\cong S^{[m_{\sigma(i)}]}$, where $\sigma \in \mathfrak{S}_r$, the symmetric group on $r$ symbols. This implies $n_i=m_{\sigma(i)}$ for all $i$ and hence ${\bf a} = {\bf b}$.
\end{proof}
 
Another consequence of the decomposition \eqref{explicit form of the decomposition} is that we can compute the automorphism group $\operatorname{Aut}(S^{[\bf a]})$. 
\begin{theorem}\label{aut gp of prod of hilb for k3} 
    Let ${\bf a}=(\underbrace{n_1,\ldots,n_1}_{\nu_{\bf a}(n_1)\; \text{times}},\ldots,\underbrace{n_r,\ldots,n_r}_{\nu_{\bf a}(n_r)\; \text{times}})$ be a partition of a positive integer $n$. Then $$\operatorname{Aut}(S^{[\bf a]}) \cong \prod_{i=1}^r \Big({\operatorname{Aut}(S^{[n_i]})}^{\nu_{{\bf a}}(n_i)} \rtimes \mathfrak{S}_{\nu_{{\bf a}}(n_i)}\Big),$$
     where $\nu_{{\bf a}}(n_i)$ is the multiplicity of $n_i$ in ${\bf a}$.
\end{theorem}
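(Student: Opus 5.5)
We will derive the description of $\operatorname{Aut}(S^{[\bf a]})$ from the uniqueness part of the Beauville decomposition \eqref{explicit form of the decomposition}, much as in the proof of Theorem \ref{Classification of product of hilb scheme of K3}. Write $X=S^{[\bf a]}=\prod_{k} X_k$ with $X_k=S^{[a_k]}$, where $a_k$ runs over the parts of ${\bf a}$ with multiplicity. The inclusion ``$\supseteq$'' is elementary. Each $\operatorname{Aut}(S^{[n_i]})^{\nu_{\bf a}(n_i)}$ acts factorwise, and $\mathfrak{S}_{\nu_{\bf a}(n_i)}$ permutes the $\nu_{\bf a}(n_i)$ identical factors $S^{[n_i]}$. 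Conjugating a factorwise automorphism by such a permutation permutes its components, so these generate a subgroup isomorphic to the stated semidirect product. This subgroup is exactly the group of automorphisms of the form $(x_k)_k\mapsto (f_k(x_{\tau^{-1}(k)}))_k$, where $\tau$ preserves the part sizes and $f_k\in\operatorname{Aut}(S^{[a_k]})$.

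For ``$\subseteq$'', take $f\in \operatorname{Aut}(X)$. First we reduce to the de Rham/Beauville splitting of the tangent bundle. Fix a Kähler class $\omega$ on $X$, and let $g$ be its Ricci-flat metric (Yau). Then $f^*g$ is the Ricci-flat metric in the class $f^*\omega$. The decomposition $TX=\bigoplus_k p_k^*TX_k$ is the unique decomposition of $TX$ into irreducible holonomy-invariant subbundles. Each summand has holonomy $\Sp(a_k)$, which is irreducible, and there is no flat factor because $X$ is simply connected.

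The key point is that this splitting does not depend on the choice of Ricci-flat metric. Since $h^{2,0}(X_k)=1$ and $X$ is simply connected, the holomorphic $2$-forms on $X$ are exactly the combinations $\sum_k c_k\,p_k^*\sigma_k$. From this space one can recover the splitting intrinsically: the subbundles $p_k^*TX_k$ are the kernels of the forms $\sum_{l\neq k}p_l^*\sigma_l$. Alternatively, one can argue with parallel forms, since every Ricci-flat metric has the same space of parallel holomorphic forms, namely the holomorphic forms (Bochner). Either way, $f$ must preserve this holomorphic splitting up to permutation. Hence $df$ maps $p_k^*TX_k$ to $p_{\tau(k)}^*TX_{\tau(k)}$ for some permutation $\tau$. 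Dimensions force $a_{\tau(k)}=a_k$, because $\dim p_k^*TX_k=2a_k$.

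Next we show that $f$ is a product map. Since $df$ preserves the foliations given by the fibres of the projections $q_k\colon X\to\prod_{l\neq k}X_l$, $f$ maps each leaf $\{x_{\hat k}\}\times X_k$ onto a leaf of the corresponding foliation. It follows that the $\tau(k)$-th component of $f$ depends only on $x_k$. Indeed, $\partial f_{\tau(k)}/\partial x_l=0$ for $l\ne k$, and $X$ is connected. So $f=(f_{\tau(k)}(x_k))$, with each $f_{\tau(k)}\colon X_k\to X_{\tau(k)}=X_k$ holomorphic. Each such map is an automorphism, since $f^{-1}$ has the same form. This places $f$ in the subgroup above, and the identification with $\prod_i\big(\operatorname{Aut}(S^{[n_i]})^{\nu}\rtimes\mathfrak{S}_\nu\big)$ is then bookkeeping.

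I expect the main obstacle to be the intrinsic characterisation of the splitting in the previous step: we need $f^*g$ and $g$ to induce the same decomposition, since $f$ need not preserve the Kähler class. The plan is to use the holomorphic symplectic forms, via $H^0(X,\Omega^2)=\bigoplus_k \C\,p_k^*\sigma_k$ by Künneth and $h^{1,0}(X_k)=0$. Then $f^*$ acts linearly on this $r$-dimensional space and must send the degenerate directions to degenerate directions. Among the forms $\sum c_k p_k^*\sigma_k$, the ones whose kernel has the largest possible dimension are exactly the multiples of a single $p_k^*\sigma_k$ (or lie in their coordinate spans), so $f^*$ must permute the lines $\C\,p_k^*\sigma_k$. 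Their kernels give the splitting canonically.
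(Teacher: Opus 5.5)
Your proof is correct, and it takes a different route from the paper's. The paper reads off from the uniqueness of the Beauville decomposition that every automorphism permutes the equal-dimensional factors $S^{[n_i]}$. This gives a surjection $\operatorname{Aut}(S^{[\bf a]})\to\prod_i\mathfrak{S}_{\nu_{\bf a}(n_i)}$ with kernel $\prod_i\operatorname{Aut}(S^{[n_i]})^{\nu_{\bf a}(n_i)}$, which is then split by the permutation automorphisms.

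The paper leaves implicit the two points you isolate:
\begin{enumerate}
\item An automorphism need not preserve the K\"ahler class, so one must know that all Ricci-flat metrics give the same splitting. This holds because $H^1(S^{[n_i]})=0$ forces every K\"ahler class on the product to be a sum of K\"ahler classes on the factors. By uniqueness in Yau's theorem, every Ricci-flat metric is then a product metric.
\item An automorphism inducing the trivial permutation is genuinely a product of factor automorphisms. This is your leaf/derivative lemma.
\end{enumerate}
Your holomorphic $2$-form argument makes the metric discussion (Yau, de Rham, Bochner) unnecessary. The K\"unneth formula with $h^{1,0}=0$, $h^{2,0}=1$, together with the pointwise nondegeneracy of the $\sigma_k$, already recovers the splitting intrinsically as kernels. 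What the paper's route buys is brevity and uniformity: it applies verbatim to products of arbitrary Beauville factors, including Calabi--Yau factors with no holomorphic $2$-forms. Yours uses the symplectic structure, which suffices here and for generalised Kummer varieties.

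One sentence in your last paragraph needs repair. When the parts have different sizes, the nonzero forms with the largest kernel are only the multiples of $p_k^*\sigma_k$ with $a_k$ minimal. That observation alone does not show that $f^*$ permutes all the lines $\C\,p_k^*\sigma_k$. Argue instead as follows. Since the forms $\sum c_k p_k^*\sigma_k$ have constant rank on $S^{[\bf a]}$ and $f$ is biholomorphic, $f^*\omega$ is degenerate exactly when $\omega$ is. Hence the linear automorphism $f^*$ preserves the degenerate locus $\bigcup_k\{c_k=0\}$. It therefore permutes these coordinate hyperplanes, and so also their intersections $\bigcap_{l\neq k}\{c_l=0\}=\C\,p_k^*\sigma_k$. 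From there your kernel argument and the leaf/derivative argument go through as written.
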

\begin{proof}
    By \eqref{explicit form of the decomposition}, there exists a surjective group homomorphism 
    $$\operatorname{Aut}(S^{[\bf a]}) \longrightarrow \prod_{i=1}^r \mathfrak{S}_{\nu_{{\bf a}}(n_i)},$$
     by sending automorphisms to the permutations induced by the repeating factors of $S^{[\bf a]}$. The kernel of this map is $\prod_{i=1}^r {\operatorname{Aut}(S^{[n_{i}]})}^{\nu_{{\bf a}}(n_i)}$. Therefore, we have a short exact sequence:
    \begin{equation*}
         1\longrightarrow \prod_{i=1}^r {\operatorname{Aut}(S^{[n_{i}]})}^{\nu_{{\bf a}}(n_i)} \longrightarrow \operatorname{Aut}(S^{[\bf a]}) \longrightarrow \prod_{i=1}^r \mathfrak{S}_{\nu_{{\bf a}}(n_i)}\longrightarrow 1.
     \end{equation*}
    This exact sequence splits via the natural inclusion of each $\mathfrak{S}_{\nu_{{\bf a}}(n_i)}$ permuting the factors. So, the assertion follows.
\end{proof}
\begin{remark}
 Similar results can be obtained for the products of generalised Kummer varieties. The generalised Kummer variety, denoted by $\text{Kum}^{n+1}(A)$ for a positive integer $n$, is an irreducible symplectic manifold of dimension $2n$ (cf. \cite[Proposition 7, p. 769]{Beauville}).  Given any $r$-tuple ${\bf a}=(n_1,\ldots,n_r)$ of positive integers, we define $\text{Kum}^{[\bf a]}(A)$ as follows:
\begin{equation*}\label{Kummer variety associated to partition}
 \text{Kum}^{[\bf a]}(A):= \text{Kum}^{n_1+1}(A) \times \cdots \times \text{Kum}^{n_r+1}(A).   
\end{equation*}
\begin{corollary}\label{Classification of product of generalised Kummer varieties}
Let ${\bf a}$ and ${\bf b}$ be two partitions of a positive integer $n$.  Then $\Kum^{[{\bf a}]}(A)$ is isomorphic to $\Kum^{[{\bf b}]}(A)$ if and only if ${\bf a}={\bf b}$.
\end{corollary}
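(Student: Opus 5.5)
The plan is to repeat the argument of Theorem \ref{Classification of product of hilb scheme of K3} with $S^{[n_i]}$ replaced by $\Kum^{n_i+1}(A)$. The converse is trivial, so suppose $f:\Kum^{[{\bf a}]}(A)\to \Kum^{[{\bf b}]}(A)$ is an isomorphism. First I record that each $\Kum^{n_i+1}(A)$ is an irreducible symplectic manifold of complex dimension $2n_i$ \cite[Proposition 7, p.~769]{Beauville}. Hence it is simply connected and compact K\"ahler with $c_1=0$, and its holonomy is $\Sp(n_i)$ \cite[Proposition 4, p.~763]{Beauville}. A finite product of such manifolds is again simply connected and compact K\"ahler with vanishing first Chern class. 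Choosing a K\"ahler form on $\Kum^{[{\bf b}]}(A)$ and the corresponding Ricci-flat metric given by Yau's theorem, the pullback by $f$ makes $f$ an isomorphism of Ricci-flat K\"ahler manifolds.

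Next I apply the uniqueness part of the decomposition \eqref{explicit form of the decomposition} to both sides. There is no flat factor $\mathbb{C}^k$, since the manifolds are compact and simply connected. There are no $\SU$-factors either, since every factor has holonomy $\Sp(n_i)$. The product metric is Ricci-flat and its holonomy is the product of the holonomies of the factors, so the de Rham decomposition of $\Kum^{[{\bf a}]}(A)$ is exactly $\prod_i \Kum^{n_i+1}(A)$, and similarly for ${\bf b}$. Uniqueness up to reordering then gives $r=s$ and a permutation $\sigma\in\mathfrak{S}_r$ with $\Kum^{n_i+1}(A)\cong \Kum^{m_{\sigma(i)}+1}(A)$ for every $i$.

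Finally, comparing dimensions gives $2n_i=2m_{\sigma(i)}$, so $n_i=m_{\sigma(i)}$ for all $i$. Since both tuples are partitions, that is, sorted in non-increasing order, this yields ${\bf a}={\bf b}$.

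The main obstacle is the second step. One must make sure that uniqueness in Beauville's decomposition applies to the factors themselves, and not merely up to some coarser equivalence. For this I would rely on the irreducibility of each factor's holonomy representation, $\Sp(n_i)$ acting on $\mathbb{C}^{2n_i}$, together with the de Rham decomposition theorem. This is exactly the input the paper already uses in the K3 case, so the argument should transfer verbatim.
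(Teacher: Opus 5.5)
Your proposal is correct and follows the paper's own proof, which says only that the argument of Theorem \ref{Classification of product of hilb scheme of K3} carries over: transport a Ricci-flat K\"ahler metric by $f$, use uniqueness of Beauville's decomposition to get $r=s$ and a matching of factors, then compare dimensions. The one step you (like the paper) leave implicit is why $f^{\ast}g$ on $\Kum^{[{\bf a}]}(A)$, which you treat as ``the product metric'', really is a product metric whose de Rham factors are the $\Kum^{n_i+1}(A)$; this holds because $b_1=0$ for each factor forces every K\"ahler class on the product to be a sum of K\"ahler classes pulled back from the factors, and the Ricci-flat metric in a K\"ahler class is unique.
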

\begin{proof}
    Similar to the proof of Theorem \ref{Classification of product of hilb scheme of K3}.
\end{proof}
  A similar result to Theorem \ref{aut gp of prod of hilb for k3} holds for $\Aut(\Kum^{[{\bf a}]}(A))$ as well.
\end{remark}
\section{Classification of products of Hilbert schemes of points over a smooth projective surface}\label{Section: classification of product for any surface}

Throughout this section, by $S$ we denote a smooth projective surface over $\C$ or $\overline{\F}_q$ unless otherwise stated. The section is organised into two subsections: the first treats partitions of different lengths, and the second treats partitions of the same length. In the latter, we perform a case-by-case analysis by fixing the values of $b_i(S)$  and $\chi(S)$.

 From \cite[Theorem 0.1, Equation 1(b), p.~193]{MR1032930}, we recall the generating function of the \textit{$i^{\text{th}}$ Betti number} $b_i(S^{[n]})$ of $S^{[n]}$:
\begin{equation}\label{Poincare series for HIlbert scheme of points on surface}
    \sum_{n=0}^{\infty} P(S^{[n]}, z) t^n = \prod_{m=1}^{\infty} \frac{(1 + z^{2m-1} t^m)^{b_1(S)} (1 + z^{2m+1} t^m)^{b_1(S)}}{(1 - z^{2m-2} t^m)^{b_0(S)} (1 - z^{2m} t^m)^{b_2(S)} (1 - z^{2m+2} t^m)^{b_0(S)}}\;, 
\end{equation}
where $P(S^{[n]}, z)=\sum_{i=0}^{2n}b_{i}(S^{[n]})z^{i}$.

From \eqref{Poincare series for HIlbert scheme of points on surface}, it is clear that 
\begin{equation}\label{kth betti number of nth Hilbert scheme is the coefficient of z^kt^n}
    b_{k}(S^{[n]})=\text{Coefficient\; of\; } z^{k}t^{n} \text{\;in\; the \;R.H.S\; of\;}\eqref{Poincare series for HIlbert scheme of points on surface}. 
\end{equation}

We note some lower Betti numbers of $S^{[n]}$. 
\begin{lemma}\label{relations between Betti numbers of S and its Hilbert scheme}
Let $n$ be any positive integer.  Then, 
\begin{enumerate}
    \item $ b_{0}(S^{[n]})=\begin{pmatrix}n+b_{0}(S)-1\\b_{0}(S)-1\end{pmatrix}$\;.
    \item $b_{1}(S^{[n]})=b_1(S)\begin{pmatrix}n+b_{0}(S)-2\\b_{0}(S)-1\end{pmatrix}$\;.
    \item If $b_0(S)=1$ and $b_1(S)=0$, then
\begin{equation*}
b_2(S^{[n]})= \left\{ \begin{array}{ll} b_2(S)+1 & \mbox{if $n>1$};\\ \\ b_2(S) &
\mbox{if $n=1$}.\end{array} \right.
\end{equation*}
\end{enumerate}
\end{lemma}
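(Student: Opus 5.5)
We read off the coefficients of $z^0t^n$, $z^1t^n$ and $z^2t^n$ in the right-hand side of \eqref{Poincare series for HIlbert scheme of points on surface}, as prescribed by \eqref{kth betti number of nth Hilbert scheme is the coefficient of z^kt^n}. Write $b_i=b_i(S)$. Each factor is a power series in $z$ and $t$. Only factors whose lowest $z$-power is at most $2$ can contribute to $z^k$ with $k\le 2$. Concretely, from the $m$-th factor a term $t^{m}$ comes with $z$-exponent at least $2m-2$, $2m-1$, $2m$, $2m+1$ or $2m+2$, according to its source.

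\textbf{Degree $0$ in $z$.} A $z$-free term arises only from $(1-z^{0}t)^{-b_0}$ with $m=1$. So
\[\sum_n b_0(S^{[n]})t^n=(1-t)^{-b_0},\]
and the coefficient of $t^n$ is $\binom{n+b_0-1}{b_0-1}$. This proves (1).

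\textbf{Degree $1$ in $z$.} Exactly one factor must contribute $z^1$. The only source of odd $z$-degree $1$ is $(1+zt)^{b_1}$ with $m=1$, and it contributes $b_1zt$. All remaining $t$-powers must come from $(1-t)^{-b_0}$. The coefficient is therefore $b_1\cdot[t^{n-1}](1-t)^{-b_0}=b_1\binom{n+b_0-2}{b_0-1}$, which is (2).

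\textbf{Degree $2$ in $z$, assuming $b_0=1$ and $b_1=0$.} The odd factors disappear and the $z$-free part becomes $(1-t)^{-1}$. There are two ways to produce $z^2$:
\begin{itemize}
\item a single $z^2t$ from $(1-z^2t)^{-b_2}$ with $m=1$, contributing $b_2$;
\item a single $z^2t^2$ from $(1-z^{2m-2}t^m)^{-b_0}$ with $m=2$, i.e. $(1-z^2t^2)^{-1}$, contributing $1$.
\end{itemize}
The factors $(1-z^4t)^{-1}$ and those with $m\ge2$ beyond this have $z$-degree at least $4$, so they do not contribute. Multiplying by $(1-t)^{-1}$, whose coefficients are all $1$, gives
\[\sum_n b_2(S^{[n]})t^n=\frac{b_2t+t^2}{1-t}.\]
The coefficient of $t^n$ is $b_2+1$ for $n\ge2$ and $b_2$ for $n=1$, which is (3).

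The only real care needed is the bookkeeping that no other factor can produce a total $z$-degree of at most $2$. This holds because all exponents grow as $2m-2$ or faster, so it is routine.
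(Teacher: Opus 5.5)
Your proposal is correct and takes the same approach as the paper: the paper's proof just says the lemma follows by extracting the coefficient of $z^kt^n$ from G\"ottsche's generating function. You carry out that extraction explicitly for $k=0,1,2$, and your accounting of which factors can contribute $z$-degree at most $2$ is accurate.
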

\begin{proof}
Follows from \eqref{kth betti number of nth Hilbert scheme is the coefficient of z^kt^n}.
\end{proof}
\subsection{Classification related to partitions of different lengths}
We introduce a partial order on partitions of different lengths of a positive integer. 
\begin{definition}\label{def: refinement}
Let ${\bf a}=(n_1,\dots,n_r)$ and ${\bf b}=(m_1,\dots,m_s)$ be two partitions of a positive integer $n$ with $r<s$.
\begin{enumerate}
\item[(i)] We say that ${\bf b}$ is a \emph{primitive refinement} of ${\bf a}$ if $s=r+1$ and there exist $i \in \{1,\dots,r\}$ and $j_1 \neq j_2 \in \{1,\dots,r+1\}$ such that
$n_i = m_{j_1} + m_{j_2}$,
and after removing $n_i$ from ${\bf a}$ and $m_{j_1}, m_{j_2}$ from ${\bf b}$, the remaining parts coincide as multisets.

\item[(ii)] We say that ${\bf b}$ is a \emph{refinement} of ${\bf a}$ if there exists a finite sequence of partitions
\[
{\bf a} = {\bf a}^{(0)}, {\bf a}^{(1)}, \dots, {\bf a}^{(t)} = {\bf b}
\]
such that ${\bf a}^{(k+1)}$ is a primitive refinement of ${\bf a}^{(k)}$.
\end{enumerate}
\end{definition}
\begin{lemma}\label{lem:refinement lemma}
    Let ${\bf a}=(n_1,\ldots,n_r)$ and ${\bf b}=(m_1,\ldots,m_s)$ be two partitions of a positive integer $n$ such that ${\bf b}$ is a refinement of ${\bf a}$. Then for any integer $p \ge 1$,
    \begin{enumerate}
        \item $\prod_{i=1}^{r}\dfrac{n_{i}+p}{p} <\prod_{j=1}^{s}\dfrac{m_{j}+p}{p}$ \;.
        \item $\prod_{i=1}^{r}\begin{pmatrix} n_{i}+p\\p \end{pmatrix} <\prod_{j=1}^{s}\begin{pmatrix} m_{j}+p\\p \end{pmatrix}$\;.
    \end{enumerate}
\end{lemma}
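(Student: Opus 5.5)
The plan is to reduce the inequality to a single primitive refinement step, since refinement is built out of such steps and strict inequality is transitive. Both sides of (1) and (2) are products over the parts of a partition of a function $f_p(x)$ of one part, with $f_p(x)=\frac{x+p}{p}$ in (1) and $f_p(x)=\binom{x+p}{p}$ in (2). Suppose ${\bf a}^{(k+1)}$ is a primitive refinement of ${\bf a}^{(k)}$, obtained by replacing a part $n_i$ by two parts $m_{j_1}+m_{j_2}=n_i$. The parts common to both partitions contribute the same positive factors to both products. So the claim for this step reduces to the \emph{superadditivity}-type inequality
\[
f_p(x+y) < f_p(x)\,f_p(y)\qquad\text{for all integers } x,y\ge 1,\ p\ge 1 .
\]
Chaining this over ${\bf a}={\bf a}^{(0)},\dots,{\bf a}^{(t)}={\bf b}$ gives the strict inequalities. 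Note that $t\ge1$ because $r<s$.

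For (1), the inequality to check is
\[
\frac{x+y+p}{p} < \frac{(x+p)(y+p)}{p^2}.
\]
Multiplying by $p^2$, this becomes $p(x+y+p) < (x+p)(y+p) = xy + p(x+y) + p^2$. That is equivalent to $xy>0$, which holds since $x,y\ge 1$.

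For (2), I need
\[
\binom{x+y+p}{p} < \binom{x+p}{p}\binom{y+p}{p}.
\]
I would write $\binom{x+p}{p}=\prod_{k=1}^{p}\frac{x+k}{k}$, so the right side equals $\prod_{k=1}^p \frac{(x+k)(y+k)}{k^2}$ and the left side equals $\prod_{k=1}^p\frac{x+y+k}{k}$. Comparing factor by factor, the $p=k$ case of part (1) gives $\frac{x+y+k}{k} < \frac{(x+k)(y+k)}{k^2}$ for each $k$. Taking the product of these $p$ strict inequalities between positive numbers gives (2). Alternatively, one could count: $\binom{x+p}{p}$ is the number of monomials of degree $\le x$ in $p$ variables, and products of such pairs surject onto monomials of degree $\le x+y$, non-injectively.

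The only step requiring care is this factorwise reduction for (2), together with bookkeeping that the unchanged parts cancel as multisets. I do not expect a genuine obstacle: the whole proof rests on the elementary two-part inequality from (1).
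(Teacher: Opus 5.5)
Your proposal is correct and follows essentially the paper's route. You reduce to a primitive refinement and verify the two-part inequality via $xy/p^2>0$. You then get (2) from (1) through $\binom{x+p}{p}=\prod_{k=1}^{p}\frac{x+k}{k}$, which is the paper's induction on $p$ (via $\binom{n+p}{p}=\binom{n+p-1}{p-1}\frac{n+p}{p}$) written out as a single product. The only cosmetic difference is that you factor at the level of one primitive step, while the paper applies (1) to the whole refinement at each stage of the induction.
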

\begin{proof}
\begin{enumerate}
        \item For convenience, we denote:
    \begin{equation}\label{eqn: notation for product of (n+p)/p}
        \operatorname{Prod}({\bf a}):=\prod_{i=1}^{r}\dfrac{n_{i}+p}{p}.
    \end{equation}
It is enough to prove the inequality assuming that ${\bf b}$ is a primitive refinement of ${\bf a}$. Since,
    $$\Big(\frac{m_{j_1}+p}{p}\Big)\Big(\frac{m_{j_2}+p}{p}\Big)-\Big(\frac{n_i+p}{p} \Big)=\frac{m_{j_1}m_{j_2}}{p^2}>0,$$
    we have,
    \begin{equation*}
        \begin{split}
            \operatorname{Prod}({\bf a}) & =\prod_{i=1}^{r}\frac{n_{i}+p}{p} \\& = \Big(\prod_{i'\ne i}\frac{n_{i'}+p}{p}\Big)\Big(\frac{n_i+p}{p}\Big) \\ & < \Big(\prod_{j' \ne j_1,j_2}\frac{m_{j'}+p}{p}\Big)\Big(\frac{m_{j_1}+p}{p}\Big)\Big(\frac{m_{j_2}+p}{p}\Big) \\ & = \prod_{j=1}^{r+1}\frac{m_{j}+p}{p} = \operatorname{Prod}({\bf b}).
        \end{split}
    \end{equation*}
    \item  We prove statement $(2)$ by induction on $p$. The basis step $p=1$ follows from taking $p=1$ in statement (1). Assume that the claim is true for $p-1$. Hence, by induction hypothesis and statement (1), we have:
    \begin{equation*}
        \begin{split}
            &\prod_{i=1}^{r}\begin{pmatrix} n_{i}+p\\p \end{pmatrix}= \prod_{i=1}^{r}\begin{pmatrix} n_{i}+p-1\\p-1\end{pmatrix}\prod_{i=1}^{r}\dfrac{n_{i}+p}{p} \\ <&\prod_{j=1}^{s}\begin{pmatrix} m_{j}+p-1\\p-1\end{pmatrix}\prod_{j=1}^{s}\dfrac{m_{j}+p}{p}=\prod_{j=1}^{s}\begin{pmatrix} m_{j}+p\\p \end{pmatrix}.
        \end{split}
    \end{equation*}
 Therefore, the inductive step follows and hence the assertion.   
\end{enumerate}
\end{proof}
\begin{theorem}\label{thm: main theorem for diff lengths}
    Let ${\bf a}=(n_1,\ldots,n_r)$ and ${\bf b}=(m_1,\ldots,m_s)$ be two partitions of a positive integer $n$ with $r < s$. Then, $S^{[{\bf a}]}$ is not isomorphic to $S^{[{\bf b}]}$,
    provided one of the following conditions holds:
    \begin{enumerate}
        \item $b_0(S)>1$ and ${\bf b}$ is a refinement of ${\bf a}$.
        \item $b_0(S)=1$ and $b_1(S)>0$.
        \item $b_0(S)=1$, $b_1(S)=0$ and $$b_2(S)\neq \frac{\nu_{{\bf b}}(1)-\nu_{{\bf a}}(1)}{s-r}-1,$$ where $\nu_{{\bf a}}(1)$ and $\nu_{{\bf b}}(1)$ are  multiplicities of $1$ in ${\bf a}$ and ${\bf b}$ respectively.
    \end{enumerate}
\end{theorem}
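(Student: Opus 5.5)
We distinguish $S^{[{\bf a}]}$ from $S^{[{\bf b}]}$ by a low-degree Betti number, computed with the Künneth formula from Lemma \ref{relations between Betti numbers of S and its Hilbert scheme}. An isomorphism preserves all $b_k$, so it suffices in each case to exhibit one $k\in\{0,1,2\}$ with $b_k(S^{[{\bf a}]})\neq b_k(S^{[{\bf b}]})$. Over $\overline{\F}_q$ the same argument works with $\ell$-adic Betti numbers, since both the generating function and Künneth hold there.

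\emph{Case (1), $b_0(S)>1$.} By Künneth,
\[
b_0(S^{[{\bf a}]})=\prod_{i=1}^r b_0(S^{[n_i]})=\prod_{i=1}^r\binom{n_i+p}{p},
\]
where $p=b_0(S)-1\ge 1$, by Lemma \ref{relations between Betti numbers of S and its Hilbert scheme}(1). The analogous formula holds for ${\bf b}$. Since ${\bf b}$ refines ${\bf a}$, Lemma \ref{lem:refinement lemma}(2) gives $b_0(S^{[{\bf a}]})<b_0(S^{[{\bf b}]})$, and we are done.

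\emph{Case (2), $b_0(S)=1$, $b_1(S)>0$.} Here every $b_0(S^{[m]})=1$ and $b_1(S^{[m]})=b_1(S)$, by Lemma \ref{relations between Betti numbers of S and its Hilbert scheme}(2) with $b_0(S)=1$. Künneth in degree $1$ then gives
\[
b_1(S^{[{\bf a}]})=\sum_i b_1(S^{[n_i]})=r\,b_1(S).
\]
Likewise $b_1(S^{[{\bf b}]})=s\,b_1(S)$. These differ because $r<s$ and $b_1(S)>0$.

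\emph{Case (3), $b_0(S)=1$, $b_1(S)=0$.} Now all factors are connected with vanishing $b_1$, so in degree $2$ Künneth gives
\[
b_2(S^{[{\bf a}]})=\sum_i b_2(S^{[n_i]})=r\,(b_2(S)+1)-\nu_{\bf a}(1),
\]
using Lemma \ref{relations between Betti numbers of S and its Hilbert scheme}(3): each part equal to $1$ contributes $b_2(S)$ and every other part contributes $b_2(S)+1$. Similarly $b_2(S^{[{\bf b}]})=s\,(b_2(S)+1)-\nu_{\bf b}(1)$. Equality of the two holds exactly when
\[
(s-r)(b_2(S)+1)=\nu_{\bf b}(1)-\nu_{\bf a}(1),
\]
that is, when $b_2(S)=\frac{\nu_{\bf b}(1)-\nu_{\bf a}(1)}{s-r}-1$. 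This value is excluded by hypothesis, so the schemes are non-isomorphic.

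No step is a real obstacle; the only care needed is the Künneth bookkeeping. In degree $1$ the cross terms vanish because only one factor can carry odd degree, all others contributing their $b_0=1$. In degree $2$ the terms $b_1\otimes b_1$ vanish because $b_1(S^{[m]})=0$ in case (3). The refinement inequality carries the substance of case (1) and is already provided by Lemma \ref{lem:refinement lemma}.
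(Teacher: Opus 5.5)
Your proposal is correct and follows the paper's proof essentially verbatim: case (1) compares $b_0$ via Lemma~\ref{lem:refinement lemma}(2) with $p=b_0(S)-1$, case (2) compares $b_1(S^{[{\bf a}]})=r\,b_1(S)$ with $b_1(S^{[{\bf b}]})=s\,b_1(S)$, and case (3) compares $b_2$ using Lemma~\ref{relations between Betti numbers of S and its Hilbert scheme}(3) together with the multiplicities $\nu_{\bf a}(1)$ and $\nu_{\bf b}(1)$. Your explicit justification of why the K\"unneth cross terms vanish in degrees $1$ and $2$ is a small, welcome addition; the paper leaves it implicit.
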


\begin{proof} By K\"unneth formula, we have:
\begin{equation}\label{Poincare polynomial of product is product of Poincare polynomials}
\text{Poincar\'e\;polynomial\;of\;}S^{[{\bf a}]}=\sum_{j=0}^{4n} b_j(S^{[{\bf a}]})t^j=\prod_{i=1}^r\bigg(\sum_{j=0}^{4n_i} b_j(S^{[{n_i}]})t^j\bigg).
\end{equation}
\begin{enumerate}
    \item As $b_0(S)>1$, we have $b_{0}(S^{[n]})>1$ (cf. Lemma \ref{relations between Betti numbers of S and its Hilbert scheme} (1)). Also, comparing the constant terms from \eqref{Poincare polynomial of product is product of Poincare polynomials}, we have:
    \begin{equation*}
      b_{0}(S^{[{\bf a}]})= \prod_{i=1}^r b_{0}(S^{[n_{i}]}).  
    \end{equation*}
For $p= b_{0}(S)-1\geq 1$, by Lemma \ref{relations between Betti numbers of S and its Hilbert scheme}(1) and Lemma \ref{lem:refinement lemma} (2), we have: 
    \begin{equation*}
          \begin{split}
              b_{0}(S^{[{\bf b}]})-b_{0}(S^{[{\bf a}]}) & = \prod_{j=1}^{s}\begin{pmatrix} m_{j}+p\\p \end{pmatrix} -\prod_{i=1}^{r}\begin{pmatrix} n_{i}+p\\p \end{pmatrix}>0.
          \end{split}  
    \end{equation*} 
    
\item As $b_{0}(S)= 1$, by Lemma \ref{relations between Betti numbers of S and its Hilbert scheme} (1), we have $b_{0}(S^{[n]})=1$, $b_{1}(S^{[n]})=b_{1}(S)$.  Therefore, comparing the coefficients of $t$ from \eqref{Poincare polynomial of product is product of Poincare polynomials}, we have:
   \begin{equation*}
    b_{1}(S^{[{\bf a}]})= \sum_{i=1}^r b_{1}(S^{[n_{i}]}).   
   \end{equation*}
Therefore, as $b_{1}(S)> 0$ and $s>r$, we have:
\begin{equation*}
   b_{1}(S^{[{\bf a}]})=rb_1(S)<sb_1(S)= b_{1}(S^{[{\bf b}]}).
\end{equation*}
\item As $b_{0}(S)=1$ and $b_{1}(S)=0$, by comparing coefficients of $t^2$ from \eqref{Poincare polynomial of product is product of Poincare polynomials} and Lemma \ref{relations between Betti numbers of S and its Hilbert scheme} (3), we have:
\begin{equation}\label{2nd Betti number incorporating no of 1s}
    b_{2}(S^{[{\bf a}]})= \sum_{i=1}^r b_{2}(S^{[n_{i}]})=\nu_{{\bf a}}(1)b_2(S)+(r-\nu_{{\bf a}}(1)) (b_{2}(S)+1). 
\end{equation}
Therefore, from \eqref{2nd Betti number incorporating no of 1s} and by the hypothesis on $b_2(S)$, 
\begin{equation}\label{eqn: Diff of b2 between to prod hilb}
        b_{2}(S^{[{\bf b}]}) - b_{2}(S^{[{\bf a}]}) = (s-r)(b_2(S)+1)-(\nu_{{\bf b}}(1)-\nu_{{\bf a}}(1)) \neq 0.
    \end{equation}
\end{enumerate}
\end{proof}
\subsection{Classification related to partitions of the same length}
\subsubsection{Surfaces with $b_0(S)>1$}
 First, we recall a couple of definitions from number theory and combinatorics.
\begin{definition}{(cf. \cite[Subsection 2.18]{hardy1952inequalities})}\label{Definition_majorization}
    Let ${\bf a}=(a_1,\ldots,a_r)$ and ${\bf b}=(b_1,\ldots,b_r)$ be two partitions of a positive integer $n$. Then we say ${\bf b}$ \textit{majorizes} ${\bf a}$, denoted by ${\bf b}\succeq {\bf a}$, if
    \begin{equation*}
            \sum_{j=1}^k a_j\leq \sum_{j=1}^k b_j \text{ for all } k \in \{1,2,\ldots,r-1\}. 
    \end{equation*} 

    If ${\bf a}\ne {\bf b}$ and ${\bf b}$ majorizes ${\bf a}$, then we say ${\bf b}$ \textit{ strictly majorizes} ${\bf a}$ and denote it by ${\bf b} \succ {\bf a}$.
\end{definition}
\begin{definition}(cf. \cite[Definition 5.1]{doi:10.1142/S179383090900035X})\label{def: Robin hood}
    Let ${\bf n}=(n_1,n_2,\ldots,n_r)\in \mathbb{N}^r$. A \textit{Robin Hood transformation} is a map $R\colon\mathbb{N}^r\to \mathbb{N}^r$, such that for some $1\leq \ell,j\leq r$ with $n_j>n_\ell$, the vector $R({\bf n})=(N_1,N_2,\ldots,N_r)$ satisfies $N_\ell=n_\ell+1,N_j=n_j-1$, and $N_k=n_k$ for $k\not\in \{\ell,j\}$.
\end{definition}

\begin{lemma}\label{lem: equiv cond for majorization}
    Let ${\bf a}$ and ${\bf b}$ be two partitions of a positive integer $n$. Then  ${\bf b}\succ{\bf a}$ if and only if ${\bf a}$ is obtained from ${\bf b}$ by a finite number of Robin Hood transformations.
\end{lemma}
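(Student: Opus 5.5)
This is the classical Muirhead--Hardy--Littlewood--P\'olya characterisation of majorization, specialised to integer vectors of equal length and equal sum. Both partitions have length $r$ and sum $n$; for a partition, sorted in nonincreasing order, write $A_k=\sum_{j\le k}a_j$ and $B_k=\sum_{j\le k}b_j$, so that $A_r=B_r=n$. I will prove the two directions separately, and I read the statement as also allowing zero transformations when ${\bf a}={\bf b}$. Throughout, a Robin Hood transformation applied to a partition will be followed by re-sorting into nonincreasing order. This is harmless, since the multiset is all that matters.

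For the direction ``Robin Hood $\Rightarrow$ majorization'', it suffices to show that one transformation can only lower the partial sums. Let $R$ take $1$ from a part $x$ and give it to a part $y<x$; the parts are then $x-1\ge y$ and $y+1\le x$. After re-sorting, I compare partial sums of the sorted vectors. Let $i$ be the last position holding the value $x$ and $i'$ the first position holding $y$, so $i<i'$; choose these positions for the change. Then the new vector is obtained by subtracting $1$ at $i$ and adding $1$ at $i'$, and it is still sorted. Hence $A_k$ drops by $1$ for $i\le k<i'$ and is unchanged otherwise. By induction on the number of steps, partial sums never increase. The result is distinct from ${\bf b}$ because some partial sum strictly dropped. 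Transitivity of $\succeq$ then gives ${\bf b}\succ{\bf a}$.

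For the converse, I argue by induction on the nonnegative integer $D({\bf a},{\bf b})=\sum_{k=1}^{r-1}(B_k-A_k)$. Suppose ${\bf b}\succ{\bf a}$. Then $D>0$. Let $j$ be the smallest index with $b_j>a_j$. Such a $j$ exists because $B_1\ge A_1$ and the vectors differ; in fact $B_j>A_j$, since the earlier partial sums agree. Let $\ell>j$ be the smallest index with $B_\ell=A_\ell$; this exists since $B_r=A_r$. Then $b_\ell<a_\ell$. Now move one unit within ${\bf b}$ to get ${\bf b}'$: subtract $1$ from the last position $j'$ among those carrying the value $b_j$, and add $1$ to the first position $\ell'$ among those carrying the value $b_\ell$.

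The main obstacle is checking that this move works, and that the positions can be chosen so that $j'<\ell$ and $\ell'\le\ell$. I would first verify that $b_j>b_\ell$, so the move is genuinely a Robin Hood transformation. This holds because $b_j>a_j\ge a_\ell>b_\ell$. Since ${\bf b}'$ stays sorted, its partial sums are those of ${\bf b}$ minus $1$ exactly on $[j',\ell')$. On that range $B_k\ge A_k+1$ by the choice of $\ell$. For positions $j'$ beyond $j$, the blocks of equal values need a short argument, again using $b_{k}\ge b_j>a_j\ge a_k$ on the block. Consequently ${\bf b}'\succeq{\bf a}$ and $D({\bf a},{\bf b}')=D({\bf a},{\bf b})-(\ell'-j')<D({\bf a},{\bf b})$. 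By induction, ${\bf a}$ is reached from ${\bf b}'$, and hence from ${\bf b}$, by finitely many Robin Hood transformations. The base case $D=0$ forces ${\bf b}={\bf a}$.
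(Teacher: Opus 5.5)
There is a false step in your proof of the direction Robin Hood $\Rightarrow$ ${\bf b}\succ{\bf a}$ (your first part). You claim that subtracting $1$ at the last position $i$ carrying $x$ and adding $1$ at the first position $i'$ carrying $y$ leaves the vector sorted and lowers the partial sums on $[i,i')$. This fails when $y=x-1$. Then $i'=i+1$, the new vector has $x-1$ in position $i$ and $x$ in position $i+1$, and after re-sorting it is the old vector, so no partial sum drops. Hence ``the result is distinct from ${\bf b}$'' is unjustified, and in fact it is false in general. With the paper's definition, $(2,1)\mapsto(1,2)\mapsto(2,1)$ is a sequence of two Robin Hood transformations that returns to ${\bf b}$. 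Under your own convention, zero transformations already give ${\bf a}={\bf b}$. In both cases ${\bf b}\not\succ{\bf b}$, so this implication, as literally stated, cannot be proved. Treat the case $x-y=1$ separately, as a transposition that leaves all sorted partial sums unchanged. Your argument then gives ${\bf b}\succeq{\bf a}$, with strict majorization exactly when some step has $x-y\ge 2$, i.e.\ exactly when ${\bf a}\ne{\bf b}$. The lemma should therefore be read with $\succeq$, or with the extra hypothesis ${\bf a}\ne{\bf b}$. The paper only uses the other implication, so nothing downstream is affected.

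Your proof of ${\bf b}\succ{\bf a}\Rightarrow$ Robin Hood, by descent on $D({\bf a},{\bf b})$, is correct. It is a self-contained replacement for the paper's bare citation of Hardy--Littlewood--P\'olya and Bringmann et al. It also gives more than the citation. Your chain $b_j>a_j\ge a_\ell>b_\ell$ yields $b_j-b_\ell\ge 2$, so every move is a genuine, non-permuting Robin Hood step and every intermediate vector is again a partition. That would let the proof of Lemma \ref{lem: majorization lemma} drop its Case 1. Make this inequality explicit: sortedness of ${\bf b}'$ when $\ell'=j'+1$ needs $b_j-b_\ell\ge 2$, not merely $b_j>b_\ell$, and this is exactly the point that breaks in the other direction. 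The extra argument about blocks of equal values is unnecessary, because $j\le j'<\ell'\le\ell$ already places $[j',\ell')$ inside $[j,\ell)$, where $B_k>A_k$.
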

\begin{proof}
 Follows from \cite[Subsection 2.19, Lemma 2]{hardy1952inequalities} and \cite[Lemma 3.2]{bringmann}.   \end{proof}

\begin{definition}\label{def: log-concavity}
    A sequence of positive integers $\{M_n\}_{n \ge 1}^t$ is said to be \textit{strictly log-concave} if $$M_n^2> M_{n-1}M_{n+1},$$
    for all $2 \le n \le t-1.$
\end{definition}
We recall the following result from  \cite[Lemma 2.3]{bringmann}:

\begin{lemma}\label{lem: equiv condition for log concavity}
    A sequence of positive integers $\{M_n\}_{n \ge 1}^t$ is strictly log-concave if and only if $$M_{m-1}M_{\ell+1}>M_mM_{\ell},$$ for all $2\le \ell+1<m\le t.$
\end{lemma}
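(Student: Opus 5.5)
The plan is to prove both directions of the equivalence by elementary manipulation of the inequalities, taking all terms positive. For the forward direction, assume $M_n^2>M_{n-1}M_{n+1}$ for $2\le n\le t-1$. Since every $M_n>0$, this is equivalent to
\[
\frac{M_{n}}{M_{n-1}}>\frac{M_{n+1}}{M_n},\qquad 2\le n\le t-1,
\]
so the ratios $r_n:=M_n/M_{n-1}$, for $2\le n\le t$, form a strictly decreasing sequence. The target inequality $M_{m-1}M_{\ell+1}>M_mM_{\ell}$ with $\ell+1<m$ can be rewritten as $M_{\ell+1}/M_\ell>M_m/M_{m-1}$, that is, $r_{\ell+1}>r_m$. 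This follows immediately from strict monotonicity of $(r_n)$, because $\ell+1<m$ and both indices lie in $\{2,\dots,t\}$: indeed $\ell+1\ge 2$ and $m\le t$.

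For the converse, specialise the hypothesis to $m=\ell+2$. Then $M_{m-1}M_{\ell+1}=M_{\ell+1}^2$ and $M_mM_\ell=M_{\ell+2}M_\ell$. Setting $n=\ell+1$, the constraint $2\le \ell+1<m\le t$ becomes exactly $2\le n\le t-1$, which gives $M_n^2>M_{n-1}M_{n+1}$. That is strict log-concavity.

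There is no real obstacle here. The only points that need care are the index bookkeeping, namely checking that the ranges $2\le n\le t-1$ and $2\le \ell+1<m\le t$ match up as described, and noting that positivity of the $M_n$ is what makes it legitimate to divide and pass to ratios.
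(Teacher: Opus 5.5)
Your argument is correct and complete. Rewriting the target as $M_{\ell+1}/M_\ell>M_m/M_{m-1}$ reduces the forward direction to the strict decrease of the ratios. The specialisation $m=\ell+2$ gives the converse on exactly the range $2\le n\le t-1$.

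The paper gives no proof of its own here. It only imports the statement from \cite[Lemma 2.3]{bringmann}, so your ratio argument supplies a self-contained one. It also uses only positivity of the $M_n$, so it covers the sequence $\{(n+p)/p\}$ to which the paper actually applies the lemma. Those terms are positive rationals rather than integers, which the lemma as stated does not literally cover.
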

We have the following result:
\begin{lemma}\label{lem: log concavity of a n+p/p}
     For any positive integer $p\ge1$, the sequence $\Big\{\frac{n+p}{p}\Big\}_{n\ge 1}^t$ is strictly log-concave.
\end{lemma}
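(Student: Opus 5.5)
Directly from Definition \ref{def: log-concavity}: set $M_n=\frac{n+p}{p}$, which is a positive rational for every $n\ge 1$. (The definition asks for positive integers; positivity is all the argument uses, and Lemma \ref{lem: equiv condition for log concavity} presumably holds verbatim for positive reals.) The plan is to show, for every $2\le n\le t-1$,
\[
M_n^2 - M_{n-1}M_{n+1} > 0 .
\]

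The key step is a direct computation. Clearing the common denominator $p^2$, the inequality is equivalent to
\[
(n+p)^2 > (n-1+p)(n+1+p).
\]
Writing $x=n+p$, the right-hand side equals $(x-1)(x+1)=x^2-1$. Hence
\[
M_n^2 - M_{n-1}M_{n+1} = \frac{1}{p^2} > 0,
\]
which proves strict log-concavity. This is exactly the same mechanism as the computation in Lemma \ref{lem:refinement lemma}(1): an expression of the form $x^2-(x-1)(x+1)$.

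There is no real obstacle here. The only point to watch is that each $M_n$ is positive (clear, since $n,p\ge1$), so that the definition applies and Lemma \ref{lem: equiv condition for log concavity} can later be invoked for the sequence. The strict inequality holds for every $n$, independently of $t$, so no case split is needed.
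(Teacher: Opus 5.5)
Your proof is correct and matches the paper's argument: the paper also computes $\bigl(\frac{n+p}{p}\bigr)^2-\bigl(\frac{n-1+p}{p}\bigr)\bigl(\frac{n+1+p}{p}\bigr)=\frac{1}{p^2}>0$ for $2\le n\le t-1$. You also rightly point out that the terms are positive rationals, not integers, so Definition \ref{def: log-concavity} and Lemma \ref{lem: equiv condition for log concavity} must be read for positive reals; the paper passes over this silently.
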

\begin{proof}
  For $2\le n \le t-1$ we have:
  $$\Big(\frac{n+p}{p}\Big)^2-\Big(\frac{n-1+p}{p}\Big)\Big(\frac{n+1+p}{p}\Big)= \frac{1}{p^2}>0.$$
  Hence, the assertion follows.
\end{proof}
\begin{lemma}\label{lem: majorization lemma}
    Let ${\bf a}=(n_1,\ldots,n_r)$ and ${\bf b}=(m_1,\ldots,m_r)$ be two partitions of a positive integer $n$ such that $\bf{b}\succ \bf{a}$. Then for any integer $p\geq1$,
    \begin{enumerate}
        \item $\prod_{i=1}^{r}\dfrac{m_{i}+p}{p}<\prod_{i=1}^{r}\dfrac{n_{i}+p}{p}$\;.
        \item $\prod_{i=1}^{r}\begin{pmatrix} m_{i}+p\\p \end{pmatrix} <\prod_{i=1}^{r}\begin{pmatrix} n_{i}+p\\p \end{pmatrix}$\;.
    \end{enumerate}

\end{lemma}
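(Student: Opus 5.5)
By Lemma \ref{lem: equiv cond for majorization}, since ${\bf b}\succ{\bf a}$, the partition ${\bf a}$ is obtained from ${\bf b}$ by a finite chain of Robin Hood transformations
\[
{\bf b}={\bf c}^{(0)},\,{\bf c}^{(1)},\,\ldots,\,{\bf c}^{(t)}={\bf a},\qquad t\ge 1,
\]
where $t\ge1$ because ${\bf a}\ne{\bf b}$. All ${\bf c}^{(k)}$ have length $r$ and positive entries summing to $n$. The quantities in (1) and (2) are symmetric functions of the entries, so reordering after each step is harmless. It therefore suffices to prove that a single Robin Hood transformation strictly increases both products; chaining the $t$ strict inequalities then gives the claim.

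For (1), I will use the notation $\operatorname{Prod}$ from \eqref{eqn: notation for product of (n+p)/p} and consider one transformation. It replaces entries $c_j>c_\ell$ by $c_j-1$ and $c_\ell+1$, leaving the others unchanged. Setting $M_k=\frac{k+p}{p}$, it is enough to show
\[
M_{c_j-1}\,M_{c_\ell+1} > M_{c_j}\,M_{c_\ell}.
\]
If $c_j\ge c_\ell+2$, this is exactly Lemma \ref{lem: equiv condition for log concavity} with $m=c_j$ and $\ell$ replaced by $c_\ell$, applied to the strictly log-concave sequence of Lemma \ref{lem: log concavity of a n+p/p}. If $c_j=c_\ell+1$, the transformation merely swaps the two values, so the product is unchanged. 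Such steps should be discarded; I expect the main care is needed here. Deleting them is harmless: one may assume each step has $c_j\ge c_\ell+2$, since a swap step only permutes entries, leaves the partition unchanged as a multiset, and can be removed from the chain. At least one genuine step remains because ${\bf a}\neq{\bf b}$. Alternatively, one can verify the inequality by direct expansion:
\[
(c_j-1+p)(c_\ell+1+p)-(c_j+p)(c_\ell+p)=c_j-c_\ell-1,
\]
which is strictly positive precisely when $c_j\ge c_\ell+2$. This confirms both the strict inequality and the need to exclude swaps.

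For (2), I will mimic the proof of Lemma \ref{lem:refinement lemma}(2) and argue by induction on $p$. The base case $p=1$ is (1) with $p=1$, since $\binom{k+1}{1}=k+1$. For the inductive step, I will write
\[
\binom{k+p}{p}=\binom{k+p-1}{p-1}\cdot\frac{k+p}{p}.
\]
The product over the parts then factors into the $(p-1)$-product times $\operatorname{Prod}$. The induction hypothesis bounds the first factor strictly, and (1) bounds the second strictly, with all factors positive. Multiplying gives the strict inequality for $p$.
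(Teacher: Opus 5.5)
Your proposal is correct and follows essentially the same route as the paper: a Robin Hood chain from ${\bf b}$ to ${\bf a}$ in which swap steps ($c_j=c_\ell+1$) leave the product unchanged and genuine steps strictly increase it by log-concavity, with at least one genuine step because ${\bf a}\neq{\bf b}$, followed by the same induction on $p$ for (2). Your direct expansion $(c_j-1+p)(c_\ell+1+p)-(c_j+p)(c_\ell+p)=c_j-c_\ell-1$ is a convenient shortcut that avoids the two log-concavity lemmas.
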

\begin{proof}
   \begin{enumerate}
       \item  Let us recall the notation from \eqref{eqn: notation for product of (n+p)/p}:
$$\operatorname{Prod}({\bf b}):=\prod_{i=1}^{r}\dfrac{m_{i}+p}{p}.$$
    Following the notation of Definition \ref{def: Robin hood}, there are two possible cases for a Robin Hood transformation $R$. \\ 
    \textbf{Case 1:} If $m_j=m_{\ell}+1,$ then we have:
    \begin{equation*}
        \begin{split}
            R({\bf b}) & =(m_1,\ldots,m_{j-1},m_j-1,m_{j+1},\ldots,m_{\ell-1},m_{\ell}+1,m_{\ell+1},\ldots,m_r) \\ & = (m_1,\ldots,m_{j-1},m_{\ell},m_{j+1},\ldots,m_{\ell-1},m_j,m_{\ell+1},\ldots,m_r).
        \end{split}
    \end{equation*}
    Hence, the multisets consisting of parts of  $R({\bf b})$ and ${\bf b}$ are the same.  Therefore,
    \begin{equation*}
      \operatorname{Prod}(R({\bf b}))=\operatorname{Prod}({\bf b}).  
    \end{equation*}\\
    \textbf{Case 2:} If $m_j> m_{\ell}+1,$ then we have:
    $$R({\bf b}) =(m_1,\ldots,m_{j-1},m_j-1,m_{j+1},\ldots,m_{\ell-1},m_{\ell}+1,m_{\ell+1},\ldots,m_r).$$
    Hence, the multisets consisting of parts of  $R({\bf b})$ and ${\bf b}$ are distinct. Moreover, by Lemma \ref{lem: log concavity of a n+p/p} and Lemma \ref{lem: equiv condition for log concavity}:

  \begin{equation}\label{comparing a and R(a)}
        \begin{split}
             & \operatorname{Prod}(R({\bf b})) \\ = & \operatorname{Prod}((m_1,\ldots,m_{j-1}))\Big( \frac{m_j-1+p}{p} \Big) \operatorname{Prod}((m_{j+1},\ldots,m_{\ell-1}))\Big(\frac{m_{\ell}+1+p}{p}\Big) \\
             &\times \operatorname{Prod}((m_{\ell+1},\ldots,m_{r}))  \\ > & \operatorname{Prod}((m_1,\ldots,m_{j-1}))\Big( \frac{m_j+p}{p} \Big) \operatorname{Prod}((m_{j+1},\ldots,m_{\ell-1}))\Big(\frac{m_{\ell}+p}{p}\Big) \operatorname{Prod}((m_{\ell+1},\ldots,m_{r}))  \\ = &\operatorname{Prod}({\bf b})\;.
        \end{split}
    \end{equation} 

     Since $\bf{b}\succ \bf{a}$, following Lemma \ref{lem: equiv cond for majorization}, there exists a finite number of Robin Hood transformations $R_1,\cdots,R_k$ such that
    $$R_k\circ\cdots\circ R_1({\bf b})= {\bf a}.$$

    We denote 
    \begin{equation*}
        \begin{split}
            R_q\circ\cdots\circ R_1({\bf b}):=(m_1^{(q)},\cdots,m_r^{(q)})={\bf b}^{(q)}\;,
        \end{split}
    \end{equation*}
    for $1\le q \le k$. Set ${\bf b}^{(0)}:=(m_1,\ldots,m_r)$.

Since ${\bf a}\ne {\bf b}$, there exists at least one integer $q$, $1\le q \le k$, such that, 
$$m_j^{(q-1)}>m_{\ell}^{(q-1)}+1.$$
Therefore, following similar calculation as in \eqref{comparing a and R(a)}, we have 
$$\operatorname{Prod}({\bf b}^{(q-1)})<\operatorname{Prod}({\bf b}^{(q)}),$$
and hence,
$$\operatorname{Prod}({\bf b})\le \operatorname{Prod}({\bf b}^{(1)})\le \cdots\le \operatorname{Prod}({\bf b}^{(q-1)})<\operatorname{Prod}({\bf b}^{(q)})\le \cdots\le  \operatorname{Prod}({\bf a}).$$
\item The argument proceeds identically to that of Lemma \ref{lem:refinement lemma} (2). We therefore omit the details.
   \end{enumerate}
\end{proof}
\begin{theorem}\label{thm: same length with b0(S) ge 1}
     Let ${\bf a}=(n_{1}, \dots,n_{r})$ and ${\bf b}=(m_{1}, \dots,m_{r})$ be two partitions of a positive integer $n$ such that ${\bf b} \succ {\bf a}$ .  Then $ S^{[{\bf a}]}$ is not isomorphic to $S^{[{\bf b}]}$, when $b_0(S)>1$.
\end{theorem}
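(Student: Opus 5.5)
The plan mirrors part (1) of Theorem \ref{thm: main theorem for diff lengths}: we distinguish $S^{[{\bf a}]}$ from $S^{[{\bf b}]}$ by their zeroth Betti numbers, that is, by the number of connected components. An isomorphism of varieties (over $\C$, or over $\overline{\F}_q$ with $\ell$-adic cohomology) preserves all Betti numbers. It therefore suffices to show $b_0(S^{[{\bf a}]})\neq b_0(S^{[{\bf b}]})$.

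First, by the K\"unneth formula \eqref{Poincare polynomial of product is product of Poincare polynomials}, comparing constant terms gives
\begin{equation*}
b_0(S^{[{\bf a}]})=\prod_{i=1}^r b_0(S^{[n_i]}), \qquad b_0(S^{[{\bf b}]})=\prod_{i=1}^r b_0(S^{[m_i]}).
\end{equation*}
Next, by Lemma \ref{relations between Betti numbers of S and its Hilbert scheme}(1), we have $b_0(S^{[k]})=\binom{k+p}{p}$ with $p:=b_0(S)-1$. The hypothesis $b_0(S)>1$ gives exactly $p\ge 1$.

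Finally, since ${\bf b}\succ{\bf a}$ and both partitions have the same length $r$, Lemma \ref{lem: majorization lemma}(2) yields
\begin{equation*}
b_0(S^{[{\bf b}]})=\prod_{i=1}^{r}\binom{m_i+p}{p}<\prod_{i=1}^{r}\binom{n_i+p}{p}=b_0(S^{[{\bf a}]}).
\end{equation*}
Hence the two products have different numbers of connected components and cannot be isomorphic.

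The only step with real content is Lemma \ref{lem: majorization lemma}(2), whose proof was omitted there as ``identical'' to Lemma \ref{lem:refinement lemma}(2). I would verify it by induction on $p$, using the factorisation $\binom{k+p}{p}=\binom{k+p-1}{p-1}\cdot\frac{k+p}{p}$. The case $p=1$ is Lemma \ref{lem: majorization lemma}(1) directly. For $p>1$, the strict inequality at level $p-1$ (all quantities positive) multiplies against the strict inequality of part (1) at level $p$ to give a strict inequality at level $p$. The remaining work is routine bookkeeping, so no serious obstacle is expected.
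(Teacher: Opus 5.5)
Your proposal is correct and matches the paper's proof: compare $b_0$ via the K\"unneth formula, use Lemma \ref{relations between Betti numbers of S and its Hilbert scheme}(1) with $p=b_0(S)-1\ge 1$, and apply Lemma \ref{lem: majorization lemma}(2). Your induction on $p$ using $\binom{k+p}{p}=\binom{k+p-1}{p-1}\frac{k+p}{p}$ to check that lemma is exactly the argument of Lemma \ref{lem:refinement lemma}(2), which the paper says carries over verbatim.
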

\begin{proof}
As in the Case 1 of Theorem \ref{thm: main theorem for diff lengths}, we have
\begin{equation*}
      b_{0}(S^{[{\bf a}]})= \prod_{i=1}^r b_{0}(S^{[n_{i}]}).  
    \end{equation*}
For $p= b_{0}(S)-1\geq 1$, by Lemma \ref{relations between Betti numbers of S and its Hilbert scheme} (1) and Lemma \ref{lem: majorization lemma} (2), we have:
    \begin{equation*}
            b_{0}(S^{[{\bf b}]})= \prod_{i=1}^{r}\begin{pmatrix} m_{i}+p\\p \end{pmatrix} \\ <\prod_{i=1}^{r}\begin{pmatrix} n_{i}+p\\p \end{pmatrix}= b_{0}(S^{[{\bf a}]}).
    \end{equation*}
    This completes the proof.
\end{proof}

\subsubsection{Surfaces with $b_0(S)=1$}\label{subsection:Surfaces with $b_0(S)=1$}
Since a large class of surfaces $S$ satisfies $b_0(S)=1$ (cf. Table \ref{Table}), including all irreducible surfaces, it is important that we address this case.

\textbf{Surfaces with $b_0(S)=1$ and $b_1(S)=0$:}
The following theorem is analogous to Theorem \ref{thm: main theorem for diff lengths} (3).
\begin{theorem}\label{thm: main theorem for same length for b_0(S)=1 and b_1(S)=0}
     Let ${\bf a}$ and ${\bf b}$ be two distinct partitions of the same length $r$ of any positive integer $n$. Then $S^{[{\bf a}]}$ is not isomorphic to $S^{[{\bf b}]}$,
    whenever $b_0(S)=1$, $b_1(S)=0$ and $\nu_{{\bf a}}(1)\ne \nu_{{\bf b}}(1)$.
\end{theorem}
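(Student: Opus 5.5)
The plan is to compare second Betti numbers, exactly as in the proof of Theorem \ref{thm: main theorem for diff lengths} (3), but now with $r=s$. An isomorphism $S^{[{\bf a}]}\cong S^{[{\bf b}]}$ would force all Betti numbers to agree, so it suffices to show $b_2(S^{[{\bf a}]})\neq b_2(S^{[{\bf b}]})$.

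First, I will use the K\"unneth formula \eqref{Poincare polynomial of product is product of Poincare polynomials}. Since $b_0(S)=1$ and $b_1(S)=0$, Lemma \ref{relations between Betti numbers of S and its Hilbert scheme} (1),(2) gives $b_0(S^{[m]})=1$ and $b_1(S^{[m]})=0$ for every $m\ge1$. Hence the coefficient of $t^2$ in the product $\prod_{i=1}^r P(S^{[n_i]},t)$ reduces to $\sum_i b_2(S^{[n_i]})$, because all cross terms involve some $b_1$, which vanishes. Combined with Lemma \ref{relations between Betti numbers of S and its Hilbert scheme} (3), this yields formula \eqref{2nd Betti number incorporating no of 1s}:
\[
b_2(S^{[{\bf a}]})=\nu_{{\bf a}}(1)\,b_2(S)+(r-\nu_{{\bf a}}(1))(b_2(S)+1)=r(b_2(S)+1)-\nu_{{\bf a}}(1).
\]
The analogous formula holds for ${\bf b}$ with the same $r$.

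Subtracting the two formulas, as in \eqref{eqn: Diff of b2 between to prod hilb} with $s=r$, gives
\[
b_2(S^{[{\bf b}]})-b_2(S^{[{\bf a}]})=\nu_{{\bf a}}(1)-\nu_{{\bf b}}(1).
\]
This is nonzero by hypothesis, which proves the claim.

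There is no real obstacle here. The only point needing care is that the degree-two coefficient of the product has no cross contributions, and this is exactly where $b_1(S)=0$ is used.
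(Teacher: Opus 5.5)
Your proposal is correct and follows essentially the paper's own argument: the paper likewise sets $s=r$ in the second-Betti-number difference formula from the proof of Theorem \ref{thm: main theorem for diff lengths} (3). That substitution gives $b_2(S^{[{\bf b}]})-b_2(S^{[{\bf a}]})=\nu_{{\bf a}}(1)-\nu_{{\bf b}}(1)\neq 0$.
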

\begin{proof}
     Putting $r=s$ in the equation \eqref{eqn: Diff of b2 between to prod hilb}, the proof follows.
\end{proof}

\textbf{Surfaces with $b_0(S)=1$ and $b_1(S)\geq 4$:}

Here, we work with surfaces $S$ over $\C$. In \cite[Conjecture 3.1, p.~204]{MR1032930}, it was conjectured (later proved in \cite{GotSoer}) that a generating function for the \textit{$(p,\;q)^{th}$ Hodge numbers} of $S^{[n]}$, denoted by $h^{p,q}(S^{[n]})$, is as follows: 
\begin{equation}\label{eqn:hodge poly for hilb}
    \sum_{n=0}^{\infty} h(S^{[n]} ; x, y) t^n = \prod_{k=1}^{\infty} \left( \frac{\prod_{p+q \text{ odd}} (1 + x^{p+k-1}y^{q+k-1}t^k)^{h^{p,q}(S)}}{\prod_{p+q \text{ even}} (1 - x^{p+k-1}y^{q+k-1}t^k)^{h^{p,q}(S)}} \right)\;,
\end{equation}
where $h(S^{[n]},x,y)=\sum_{p,q}h^{p,q}(S^{[n]})x^py^q$ is the \textit{Hodge polynomial} of $S^{[n]}$. For surfaces with $b_0(S)=h^{0,0}(S)=1$, taking $q=0$ in \eqref{eqn:hodge poly for hilb}, we have the generating series of the Hodge numbers $h^{p,0}(S)$, given as follows (cf. \cite[Proposition 3.3, p.~205]{MR1032930}):
\begin{equation}\label{generating series for h(p,0)}
            \sum_{n\geq 0}\bigg(\sum_{p=0}^{2n}h^{p,0}(S^{[n]})x^p\bigg)t^{n}= \dfrac{(1+xt)^{h^{1,0}(S)}}{(1-t)(1-x^2t)^{h^{2,0}(S)}}\;.
\end{equation}

Therefore, from \eqref{generating series for h(p,0)},
\begin{equation}\label{(p,0)th Hodge number}
       h^{p,0}(S^{[n]})=\text{Coefficients of } x^pt^{n} \text{\;in\;} \tfrac{(1+xt)^{h^{1,0}(S)}}{(1-t)(1-x^2t)^{h^{2,0}(S)}}\;.
\end{equation}
\begin{lemma}\label{Equality of (p,0)th Hodge numbers}
  For $b_0(S)=1$, the following hold:
   \begin{enumerate}
       \item $h^{p,0}(S^{[n]})=h^{p,0}(S^{[m]}),$ for $0\leq p\leq n\leq m$.
       \item $ h^{n+1,0}(S^{[m]})- h^{n+1,0}(S^{[n]})=\begin{pmatrix}
            h^{1,0}(S)\\ n+1
        \end{pmatrix},$ for $1\leq n<m$. \\ Hence, $h^{n+1,0}(S^{[n]})\neq h^{n+1,0}(S^{[m]}),$ whenever $h^{1,0}(S)\geq n+1$. 
   \end{enumerate}

 \end{lemma}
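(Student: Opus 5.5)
The plan is to read off $h^{p,0}(S^{[n]})$ directly from \eqref{(p,0)th Hodge number} by expanding the generating function as a power series in $t$. Write $h_1=h^{1,0}(S)$ and $h_2=h^{2,0}(S)$. Then
\begin{equation*}
\frac{(1+xt)^{h_1}}{(1-t)(1-x^2t)^{h_2}}=\frac{1}{1-t}\sum_{i\ge 0}\sum_{j\ge 0}\binom{h_1}{i}\binom{h_2+j-1}{j}x^{i+2j}t^{i+j}.
\end{equation*}
Multiplying by $1/(1-t)=\sum_k t^k$ converts the coefficient of $t^n$ into a partial sum over $k=i+j\le n$. This gives the explicit formula
\begin{equation*}
h^{p,0}(S^{[n]})=\sum_{\substack{i+2j=p\\ i+j\le n}}\binom{h_1}{i}\binom{h_2+j-1}{j}.
\end{equation*}
This computation is routine. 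The observation doing all the work is that the monomial $x^pt^k$ of $F(x,t)=(1+xt)^{h_1}(1-x^2t)^{-h_2}$ has $k=i+j$ with $p=i+2j$, so $k\le p$ and $k=p$ forces $j=0$. In other words, $h^{p,0}(S^{[n]})$ is the sum of coefficients of $x^p t^k$ in $F$ over $k\le n$.

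For part (1), take $p\le n\le m$. Every term contributing to $x^p$ already satisfies $i+j\le i+2j=p\le n$, so the constraint $i+j\le n$ (and likewise $i+j\le m$) is vacuous. Hence both Hodge numbers equal the full sum over $i+2j=p$, and they agree.

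For part (2), take $p=n+1$ and $m\ge n+1$. For $S^{[m]}$ the constraint $i+j\le m$ is again vacuous, since $i+j\le n+1\le m$. For $S^{[n]}$ the only excluded term is the one with $i+j=n+1=i+2j$, namely $j=0$ and $i=n+1$, which contributes $\binom{h_1}{n+1}\binom{h_2-1}{0}=\binom{h_1}{n+1}$. So the difference is exactly $\binom{h^{1,0}(S)}{n+1}$. This is nonzero precisely when $h^{1,0}(S)\ge n+1$, which gives the final assertion.

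The main point to handle carefully is the bookkeeping of the truncation $i+j\le n$ coming from the factor $1/(1-t)$. One must also note that the case $h_2=0$ is harmless, since then $\binom{h_2+j-1}{j}=\delta_{j0}$ and the formula still holds. Beyond that there is no genuine obstacle.
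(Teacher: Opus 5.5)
Your proposal is correct and follows essentially the same route as the paper: both extract the coefficient of $x^pt^n$ by splitting it into contributions from $(1+xt)^{h^{1,0}(S)}$, $(1-x^2t)^{-h^{2,0}(S)}$ and $(1-t)^{-1}$. Both then observe that the factor $(1-t)^{-1}$ only imposes the truncation $i+j\le n$, and identify the single excluded term $j=0$, $i=n+1$ as the source of $\binom{h^{1,0}(S)}{n+1}$. Your closed-form sum over $i+2j=p$, $i+j\le n$ is a tidier packaging of the paper's bookkeeping with the sets $U$ and $V$.
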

\begin{proof}
\begin{enumerate}
    \item For any $0\leq p\leq n\leq m$, by \eqref{(p,0)th Hodge number}, we have:
   \begin{equation*}
        \begin{split}
            h^{p,0}&(S^{[n]})=\text{ coefficient of }x^pt^n \text{\;in\;}\tfrac{(1+xt)^{h^{1,0}(S)}}{(1-t)(1-x^2t)^{h^{2,0}(S)}}\\
            &=\sum_{t_1+2t_2=p,\;t_i\geq 0}\Big(\text{coefficient of }(xt)^{t_1} \text{ in } (1+xt)^{h^{1,0}(S)}\\&\;\;\;\;\;\times\text{ coefficient of } (x^2t)^{t_2} \text{ in }(1-x^2t)^{-h^{2,0}(S)} \times \text{ coefficient of } (t)^{n-p+t_2} \text{ in }(1-t)^{-1}\Big)\\
            &=\sum_{t_1+2t_2=p,\;t_i\geq 0}\Big(\text{coefficient of }(xt)^{t_1} \text{ in } (1+xt)^{h^{1,0}(S)}\\&\;\;\;\;\;\times\text{ coefficient of } (x^2t)^{t_2} \text{ in }(1-x^2t)^{-h^{2,0}(S)}\times 1\Big)\\
            &=\sum_{t_1+2t_2=p,\;t_i\geq 0}\Big(\text{coefficient of }(xt)^{t_1} \text{ in } (1+xt)^{h^{1,0}(S)}\\&\;\;\;\;\;\times\text{ coefficient of } (x^2t)^{t_2} \text{ in }(1-x^2t)^{-h^{2,0}(S)} \times \text{ coefficient of } (t)^{m-p+t_2} \text{ in }(1-t)^{-1}\Big)
            \\&= \text{ coefficient of }x^pt^m \text{\;in\;}\tfrac{(1+xt)^{h^{1,0}(S)}}{(1-t)(1-x^2t)^{h^{2,0}(S)}}=h^{p,0}(S^{[m]}).
        \end{split}
    \end{equation*}
    \item Consider the set $U$ defined as follows:
\begin{equation*}
U:=\big\{(t_1,t_2,t_3)\in \mathbb{Z}_{\geq 0}^3\mid \sum_{i=1}^3t_i=n,t_1+2t_2=n+1\big\}\;.    
\end{equation*}
 $U$ can be written as the set $V$ given as follows:
\begin{equation*}
V:=\big\{(t_1,t_2,t_3)\in \mathbb{Z}_{\geq 0}^3\mid \sum_{i=1}^3t_i=n,t_1+2t_2=n+1,t_2>0\big\}\;.    
\end{equation*}
Therefore, for $1\leq n$, from \eqref{(p,0)th Hodge number}, we have:
\begin{equation*}
        \begin{split}
            h^{n+1,0}&(S^{[n]})=\text{ coefficient of }x^{n+1}t^n \text{\;in\;}\tfrac{(1+xt)^{h^{1,0}(S)}}{(1-t)(1-x^2t)^{h^{2,0}(S)}}\\ &=\sum_{(t_1,t_2,t_3)\in U}\Big(\text{coefficient of }(xt)^{t_1} \text{ in } (1+xt)^{h^{1,0}(S)}\\&\;\;\;\;\;\times\text{coefficient of } (x^2t)^{t_2} \text{ in }(1-x^2t)^{-h^{2,0}(S)}\times \text{coefficient of } t^{t_3} \text{ in }(1-t)^{-1}\Big)\\
            &=\sum_{(t_1,t_2,t_3)\in V} \Big(\text{coefficient of }(xt)^{t_1} \text{ in } (1+xt)^{h^{1,0}(S)}\\&\;\;\;\;\;\times\text{coefficient of } (x^2t)^{t_2} \text{ in }(1-x^2t)^{-h^{2,0}(S)}\times \text{coefficient of } t^{t_3} \text{ in }(1-t)^{-1}\Big)\\
            &=\sum_{\substack{t_1+2t_2=n+1\\t_1\geq 0,\;t_2>0}}\Big(\text{coefficient of }(xt)^{t_1} \text{ in } (1+xt)^{h^{1,0}(S)}\\&\;\;\;\;\;\times\text{coefficient of } (x^2t)^{t_2} \text{ in }(1-x^2t)^{-h^{2,0}(S)}\Big).
        \end{split}
    \end{equation*}
    For $n< m$, from \eqref{(p,0)th Hodge number}, we have:
\begin{equation*}
        \begin{split}
            h^{n+1,0}&(S^{[m]})=\text{coefficient of }x^{n+1}t^m \text{\;in\;}\tfrac{(1+xt)^{h^{1,0}(S)}}{(1-t)(1-x^2t)^{h^{2,0}(S)}}\\&=\sum_{\substack{t_1+2t_2=n+1\\t_1+t_2+t_3=m,\;t_i\geq 0}}\Big(\text{coefficient of }(xt)^{t_1} \text{ in } (1+xt)^{h^{1,0}(S)}\\&\;\;\;\;\;\times\text{coefficient of } (x^2t)^{t_2} \text{ in }(1-x^2t)^{-h^{2,0}(S)}\times \text{coefficient of } t^{t_3} \text{ in }(1-t)^{-1}\Big)\\
        &=\sum_{\substack{t_1+2t_2=n+1\\t_1+t_2+t_3=m,\;t_1\geq 0,\;t_3\geq 0,\; t_2>0}}\Big(\text{coefficient of }(xt)^{t_1} \text{ in } (1+xt)^{h^{1,0}(S)}\\&\;\;\;\;\;\times\text{coefficient of } (x^2t)^{t_2} \text{ in }(1-x^2t)^{-h^{2,0}(S)}\times \text{coefficient of } t^{t_3} \text{ in }(1-t)^{-1}\Big)\\&+\Big(\text{coefficient of }(xt)^{n+1} \text{ in } (1+xt)^{h^{1,0}(S)}\times\text{coefficient of } 1 \text{ in }(1-x^2t)^{-h^{2,0}(S)}\\&\;\;\;\;\;\times \text{coefficient of } {t}^{m-(n+1)} \text{ in }(1-t)^{-1}\Big)\\
            &=\sum_{\substack{t_1+2t_2=n+1\\t_1\geq 0,\;t_2>0}}\Big(\text{coefficient of }(xt)^{t_1} \text{ in } (1+xt)^{h^{1,0}(S)}\\&\;\;\;\;\;\times\text{coefficient of } (x^2t)^{t_2} \text{ in }(1-x^2t)^{-h^{2,0}(S)}\times 1\Big)\\&\;\;\;\;\;+\Big(\text{coefficient of }(xt)^{n+1} \text{ in } (1+xt)^{h^{1,0}(S)}\times 1 \times 1\Big).
        \end{split}
    \end{equation*}
   Therefore, we have:
      \begin{equation*}\label{quantative difference between h(p,0)}
          \begin{split}
              h^{n+1,0}(S^{[m]})-h^{n+1,0}(S^{[n]})&=\text{ coefficient of }x^{n+1}t^m \text{\;in\;}\tfrac{(1+xt)^{h^{1,0}(S)}}{(1-t)(1-x^2t)^{h^{2,0}(S)}}\\& -\text{ coefficient of }x^{n+1}t^n \text{\;in\;}\tfrac{(1+xt)^{h^{1,0}(S)}}{(1-t)(1-x^2t)^{h^{2,0}(S)}}\\&=\text{ coefficient of }(xt)^{n+1} \text{ in }(1+xt)^{h^{1,0}(S)}\\&=\begin{pmatrix}
            h^{1,0}(S)\\ n+1
        \end{pmatrix}.
          \end{split}
      \end{equation*}
Hence, the assertion follows.
\end{enumerate}
\end{proof}

By the K\"unneth formula for $(p,0)^{\text{th}}$ Hodge numbers (cf. \cite[p.~105]{GH2014}), we have: 
\begin{equation}\label{Kunneth formula for h(p,0)}
    h^{p,0}(S^{[{\bf a}]})=\sum_{\substack{t_1+\cdots+t_r=p\\t_i\geq 0,\; 1\leq i \leq r}}\bigg(\prod_{i=1}^rh^{t_i,0}(S^{[n_i]})\bigg),
\end{equation}
where ${\bf a}=(n_{1}, n_{2}, \dots,n_{r})$.
\begin{theorem}\label{thm:classification_same length_condition involves first betti number and minimal part}
     Let ${\bf a}=(n_{1}, \dots,n_{r})$ and ${\bf b}=(m_{1}, \dots,m_{r})$ be two distinct partitions of any positive integer $n$ and $e = \operatorname{max}\{i \in \{1,2,\ldots,r\} : n_i \ne m_i\}$.  Then $S^{[{\bf a}]}$ is not isomorphic to $S^{[{\bf b}]}$, whenever  $b_0(S)=1$ and $b_1(S)\ge 2(\min\{n_e,m_e\}+1)$.
\end{theorem}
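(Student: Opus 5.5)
We may assume, after swapping ${\bf a}$ and ${\bf b}$ if necessary, that $k:=n_e<m_e$, so that $k=\min\{n_e,m_e\}$. Since $b_1(S)=2h^{1,0}(S)$, the hypothesis reads $h^{1,0}(S)\ge k+1$. The plan is to show that $h^{k+1,0}(S^{[{\bf a}]})\ne h^{k+1,0}(S^{[{\bf b}]})$. This forces $S^{[{\bf a}]}\not\cong S^{[{\bf b}]}$, because Hodge numbers are isomorphism invariants.

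To organise the computation, I encode the $(p,0)$ Hodge numbers as polynomials $f_n(x):=\sum_p h^{p,0}(S^{[n]})x^p$. By \eqref{Kunneth formula for h(p,0)}, the $(p,0)$ Hodge numbers of $S^{[{\bf a}]}$ are the coefficients of $\prod_{i=1}^r f_{n_i}(x)$. Let $F(x):=\sum_p h^{p,0}(S^{[N]})x^p$ for $N\ge p$, the stable power series. It is well defined by Lemma \ref{Equality of (p,0)th Hodge numbers}(1), and it has constant term $1$ since $b_0(S)=1$. Lemma \ref{Equality of (p,0)th Hodge numbers} then gives two facts:
\[
f_n\equiv F \pmod{x^{n+1}},\qquad f_n\equiv F-C\,x^{n+1}\pmod{x^{n+2}},
\]
where the second holds for the coefficient of $x^{n+1}$ by Lemma \ref{Equality of (p,0)th Hodge numbers}(2). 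At $n=k$ the constant is $C=\binom{h^{1,0}(S)}{k+1}>0$. In particular $f_{n}\equiv F \pmod{x^{k+2}}$ whenever $n\ge k+1$.

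Now I compare the two products modulo $x^{k+2}$. The parts with index $i>e$ coincide in ${\bf a}$ and ${\bf b}$; their product $G(x)$ is common to both sides and has constant term $1$.

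For $i\le e$, the partitions are nonincreasing, so $m_i\ge m_e\ge k+1$ and $n_i\ge n_e=k$. Let $c\ge 1$ be the number of $i\le e$ with $n_i=k$; this count includes $i=e$. Then modulo $x^{k+2}$:
\[
\prod_i f_{m_i}\equiv G F^{e},\qquad \prod_i f_{n_i}\equiv G F^{e-c}(F-Cx^{k+1})^c\equiv GF^e-cC\,x^{k+1}.
\]
The last congruence uses that $F$ and $G$ have constant term $1$, so every cross term carries a factor $x^{k+1}$ times something of positive order, except the single linear term. Hence
\[
h^{k+1,0}(S^{[{\bf b}]})-h^{k+1,0}(S^{[{\bf a}]})=c\binom{h^{1,0}(S)}{k+1}>0.
\]

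The main care point is the bookkeeping in the middle step: correctly identifying which factors agree with $F$ up to order $k+1$. This relies on the monotonicity of the parts and on the maximality of $e$. I also need to check that Lemma \ref{Equality of (p,0)th Hodge numbers}(2) applies for $n=k\ge1$ with $m$ ranging over the $m_i\ge k+1$. Beyond that, the argument is a truncated power series computation.
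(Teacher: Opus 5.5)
Your proposal is correct and follows essentially the same route as the paper. Both compare $h^{k+1,0}$ with $k=\min\{n_e,m_e\}$ using the K\"unneth formula \eqref{Kunneth formula for h(p,0)} and Lemma \ref{Equality of (p,0)th Hodge numbers}, and both obtain the difference $c\binom{h^{1,0}(S)}{k+1}$; the paper's multiplicity count (called $k$ there, in the reversed non-decreasing ordering) is your $c$. The only difference is bookkeeping: your congruence of generating polynomials modulo $x^{k+2}$ replaces the paper's case split of the index set $T$ into $T'$, $T''_1$, $T''_2$, $T''_3$, and is somewhat cleaner.
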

\begin{proof}
For notational convenience, we consider the parts of the partition in non-decreasing order. Equivalently, we show that $S^{[{\bf a}]}$ is not isomorphic to $S^{[{\bf b}]}$ whenever $b_0(S)=1$ and $b_1(S)\ge 2(\min\{n_j,m_j\}+1)$, where  $j = \operatorname{min}\{i \in \{1,2,\ldots,r\} : n_i \ne m_i\}$.

As $n_j\neq m_j$, without loss of generality, we assume $n_j<m_j$.  Moreover, suppose $n_j$ occurs $k$ times in $\{n_j,n_{j+1},\dots , n_{r}\}$.  Then, the parts of the partitions $\bf{a}$ and $\bf{b}$ can be compared as follows:

$$\renewcommand{\arraystretch}{1.3}
\begin{array}[c]{ccccccccccccccccc}
n_1 & \leq &\cdots & \leq & n_{j-1}& \leq & n_j & =&\cdots & = & n_{j+k-1}&< & n_{j+k} & \leq & \cdots & \leq & n_r\\
\big\|&  & & & \big\| &  & \bigwedge &&& & & &&  & & & \\
m_1 & \leq &\cdots & \leq & m_{j-1}& \leq & m_j &\leq&\cdots & \leq & m_{j+k-1}&\leq & m_{j+k} & \leq  & \cdots & \leq & m_r\\
\multicolumn{5}{c}{\underbrace{\hspace{3.2cm}}_{1\; \le \;i \;\le\; j-1}}
& \multicolumn{6}{c}{\underbrace{\hspace{4.2cm}}_{j\; \le \;i \;\le\; j+k-1}}
& \multicolumn{6}{c}{\underbrace{\hspace{4.2cm}}_{j+k\; \le\; i \;\le\; r}}
\end{array}$$
That is, we have the following equalities and inequalities:

\begin{equation}\label{ordering of the parts}
        n_i=m_i \text{ for all }1 \leq i \leq j-1\text{ and }n_j+1 \leq m_u\text{ for all }j \leq u \leq r.
    \end{equation}
\begin{equation}\label{repetition of n_j}
        n_{j+l}=n_j \text{ for all }0 \leq l \leq k-1\text{ and }n_j+1\leq n_{v} \text{ for all }j+k \leq v \leq r.
    \end{equation} 
Let $T$ be the following set:
\begin{equation}\label{eqn_definition of set T}
    T:=\{(t_1,\ldots,t_r)\in \mathbb{Z}_{\geq 0}^r\mid \sum_{i=1}^rt_i=n_j+1\}.
\end{equation}
Let $T^{\prime}$ and $T^{\prime \prime}$ be the following subsets of $T$.
\begin{equation*}
\begin{split}
    T^{\prime}&:=\{(t_1,\ldots,t_r)\in T\mid t_i\leq n_j \text{\;for\;all\;}1\leq i\leq r\},\\
    T^{\prime \prime}&:=\{(t_1,\ldots,t_r)\in T\mid t_i= n_j+1 \text{\;for\;some\;}1\leq i\leq r\}.
    \end{split}
\end{equation*}
Clearly, we have the following decomposition of $T$, as defined in \eqref{eqn_definition of set T}:
\begin{equation}\label{eqn_decomposition of set T}
T=T^{\prime}\cup T^{\prime \prime} \text{\;and\;}T^{\prime}\cap T^{\prime \prime}=\emptyset.  
\end{equation}
We now consider the following subsets of $T^{\prime \prime}$ itself.
\begin{equation*}
\begin{split}
    T^{\prime \prime}_1&:=\{(t_1,\ldots,t_r)\in T^{\prime \prime}\mid t_i= n_j+1 \text{\;for\;some\;}1\leq i\leq j-1\},\\
    T^{\prime \prime}_2&:=\{(t_1,\ldots,t_r)\in T^{\prime \prime}\mid t_i= n_j+1 \text{\;for\;some\;}j\leq i\leq j+k-1\},\\
    T^{\prime \prime}_3&:=\{(t_1,\ldots,t_r)\in T^{\prime \prime}\mid t_i= n_j+1 \text{\;for\;some\;}j+k\leq i\leq r\}.
 \end{split}   
\end{equation*}
Then, clearly, we have:
\begin{equation}\label{eqn_decomposition of T prime prime}
 T^{\prime \prime}=T^{\prime\prime}_1\cup T^{\prime \prime}_2\cup T^{\prime \prime}_3  \text{\;and\;}T^{\prime \prime}_{i_1}\cap T^{\prime \prime}_{i_2}=\emptyset \text{\;for\;}1\leq i_1\neq i_2 \leq 3.   
\end{equation}
 $\textbf{Claim:\;\;\;\;\;\;\;\;\;\;} h^{n_j+1,0}(S^{[{\bf a}]})\neq h^{n_j+1,0}(S^{[{\bf b}]}), \text{\;whenever\;} h^{1,0}(S)\geq n_j+1.$\\
\textit{Proof of the claim:}
By \eqref{(p,0)th Hodge number} and \eqref{Kunneth formula for h(p,0)}, we have:
\begin{equation}\label{(n_j+1,0)th Hodge number for a}
        \begin{split}
            h^{n_j+1,0}(S^{[{\bf a}]})& =\sum_{(t_1,\ldots,t_r)\in T}\bigg(\prod_{i=1}^{r}\text{ coefficient of }x^{t_i}t^{n_i} \text{ in } \tfrac{(1+xt)^{h^{1,0}(S)}}{(1-t)(1-x^2t)^{h^{2,0}(S)}}\bigg)\;.
        \end{split}
    \end{equation}
    Similarly, we have: 
\begin{equation}\label{(n_j+1,0)th Hodge number for b}
        \begin{split}
            h^{n_j+1,0}(S^{[{\bf b}]})& =\sum_{(t_1,\ldots,t_r)\in T}\bigg(\prod_{i=1}^{r}\text{ coefficient of }x^{t_i}t^{m_i} \text{ in } \tfrac{(1+xt)^{h^{1,0}(S)}}{(1-t)(1-x^2t)^{h^{2,0}(S)}}\bigg)\;.
        \end{split}
    \end{equation}
 From \eqref{eqn_decomposition of set T} and \eqref{eqn_decomposition of T prime prime}, we have the following decomposition of $T$, as defined in \eqref{eqn_definition of set T}:
 \begin{equation*}
   T=T^{\prime}\cup T^{\prime \prime}_1\cup T^{\prime \prime}_2\cup T^{\prime \prime}_3.  
 \end{equation*}
Therefore, by \eqref{(n_j+1,0)th Hodge number for a} and \eqref{(n_j+1,0)th Hodge number for b}, to compare the Hodge numbers $h^{n_j+1,0}(S^{[{\bf a}]})$ and $h^{n_j+1,0}(S^{[{\bf b}]})$, it is enough to consider the following mutually exclusive and exhaustive cases.\\
\textbf{Case 1:} $(t_1,\ldots,t_r)\in  T^{\prime}$\\
For any $(t_1,\ldots,t_r)\in  T^{\prime}$, partition ${\bf a}=(n_1,\ldots,n_r)$ and any $1\leq h \leq l \leq r$, we denote:
\begin{equation*}
 \Prod_h^{l}({\bf a}, (t_1,\ldots, t_r)):=\prod_{i=h}^{l}\text{ coefficient of }x^{t_i}t^{n_i} \text{ in } \tfrac{(1+xt)^{h^{1,0}(S)}}{(1-t)(1-x^2t)^{h^{2,0}(S)}}\;.   
\end{equation*}
As $n_i=m_i$ for all $1\leq i \leq j-1$, we have 
\begin{equation}\label{T prime_first range}
 \Prod_1^{j-1}({\bf a}, (t_1,\ldots, t_r))=\Prod_1^{j-1}({\bf b}, (t_1,\ldots, t_r)).   
\end{equation}
By \eqref{ordering of the parts} and \eqref{repetition of n_j} , we have $t_i\leq n_j =n_i <m_i$ for all $j\leq i \leq j+k-1$.  Therefore, by Lemma \ref{Equality of (p,0)th Hodge numbers} (1), 
\begin{equation}\label{T prime_second range}
 \Prod_{j}^{j+k-1}({\bf a}, (t_1,\ldots, t_r))=\Prod_{j}^{j+k-1}({\bf b}, (t_1,\ldots, t_r)). \end{equation}
 By \eqref{ordering of the parts} and \eqref{repetition of n_j} , we have either $t_i \leq n_j < n_j+1 \leq n_i \leq m_i$ or $t_i \leq n_j < n_j+1 \leq m_i \leq n_i$, for all $j+k\leq i \leq r$.  Therefore, by Lemma \ref{Equality of (p,0)th Hodge numbers} (1),
 \begin{equation}\label{T prime_third range}
 \Prod_{j+k}^{r}({\bf a}, (t_1,\ldots, t_r))=\Prod_{j+k}^{r}({\bf b}, (t_1,\ldots, t_r)). \end{equation}
So, for any $(t_1,\ldots,t_r)\in  T^{\prime}$, by \eqref{T prime_first range}, \eqref{T prime_second range} and \eqref{T prime_third range}, R.H.S of \eqref{(n_j+1,0)th Hodge number for a} and R.H.S of \eqref{(n_j+1,0)th Hodge number for b} are equal.\\ 
\textbf{Case 2:} $(t_1,\ldots,t_r)\in  T^{\prime \prime}_1 $\\
 The corresponding summand of the R.H.S of \eqref{(n_j+1,0)th Hodge number for a} and that of R.H.S of \eqref{(n_j+1,0)th Hodge number for b} are equal as $n_i=m_i$ for all $1 \leq i \leq j-1$.\\
\textbf{Case 3:} $(t_1,\ldots,t_r)\in T^{\prime \prime }_3 $\\
By \eqref{ordering of the parts} and \eqref{repetition of n_j} , we have either $t_i \leq n_j+1 \leq  n_i \leq m_i$ or $t_i \leq n_j+1 \leq m_i \leq n_i$, where $(t_1,\ldots,t_r)\in T^{\prime\prime}_3$.  Therefore, the corresponding summand of the R.H.S of \eqref{(n_j+1,0)th Hodge number for a} and that of R.H.S of \eqref{(n_j+1,0)th Hodge number for b} are equal by Lemma \ref{Equality of (p,0)th Hodge numbers} (1).\\
\textbf{Case 4:} $(t_1,\ldots,t_r)\in T^{\prime \prime }_2 $\\
We have $t_{i_0}=n_j+1$ for some $j\leq i_0 \leq j+k-1$ and $t_{i}=0$ for all $j\leq i(\neq i_0) \leq j+k-1$.  Therefore, for this case, we have two mutually exclusive and exhaustive subcases.\\
\textbf{(a)} $0=t_i<n_i< m_i$ for all $j\leq i(\neq i_0) \leq j+k-1$: The corresponding product terms of the R.H.S of \eqref{(n_j+1,0)th Hodge number for a} and that of R.H.S of \eqref{(n_j+1,0)th Hodge number for b} are equal by Lemma \ref{Equality of (p,0)th Hodge numbers} (1).\\
\textbf{(b)} $t_{i_0}=n_j+1$: The corresponding product term of the R.H.S of \eqref{(n_j+1,0)th Hodge number for a} and that of R.H.S of \eqref{(n_j+1,0)th Hodge number for b} are unequal by Lemma \ref{Equality of (p,0)th Hodge numbers} (2), whenever $h^{1,0}(S)\geq n_j+1$.\\\\
Therefore, from \eqref{(n_j+1,0)th Hodge number for a} and \eqref{(n_j+1,0)th Hodge number for b}, Lemma \ref{Equality of (p,0)th Hodge numbers} (2) and the above case wise discussion, we have:
\begin{equation*}
        \begin{split}
            h^{n_j+1,0}(S^{[{\bf b}]})&-h^{n_j+1,0}(S^{[{\bf a}]})\\
            &=\sum_{(t_1,\ldots,t_r)\in T}\bigg(\prod_{i=1}^{r}\text{ coefficient of }x^{t_i}t^{m_i} \text{ in } \tfrac{(1+xt)^{h^{1,0}(S)}}{(1-t)(1-x^2t)^{h^{2,0}(S)}}\bigg)\\&-\sum_{(t_1,\ldots,t_r)\in T}\bigg(\prod_{i=1}^{r}\text{ coefficient of }x^{t_i}t^{n_i} \text{ in } \tfrac{(1+xt)^{h^{1,0}(S)}}{(1-t)(1-x^2t)^{h^{2,0}(S)}}\bigg)\\
        &=\sum_{(t_1,\ldots,t_r)\in T^{\prime \prime }_2}\bigg(\prod_{i=1}^{r}\text{ coefficient of }x^{t_i}t^{m_i} \text{ in } \tfrac{(1+xt)^{h^{1,0}(S)}}{(1-t)(1-x^2t)^{h^{2,0}(S)}}\bigg)\\&-\sum_{(t_1,\ldots,t_r)\in T^{\prime \prime }_2}\bigg(\prod_{i=1}^{r}\text{ coefficient of }x^{t_i}t^{n_i} \text{ in } \tfrac{(1+xt)^{h^{1,0}(S)}}{(1-t)(1-x^2t)^{h^{2,0}(S)}}\bigg)\\&=\card( T^{\prime \prime}_2) \begin{pmatrix}
            h^{1,0}(S)\\ n_j+1\end{pmatrix} =k \begin{pmatrix}
            h^{1,0}(S)\\ n_j+1\end{pmatrix}.
        \end{split}
    \end{equation*}
    Therefore, as $k\ge 1$ (cf. \eqref{repetition of n_j}), $h^{n_j+1,0}(S^{[{\bf b}]})\ne h^{n_j+1,0}(S^{[{\bf a}]})$, whenever $h^{1,0}(S)\geq n_j+1$.  Hence, the claim follows.
    
    Moreover, we have, $h^{1,0}(S)+h^{0,1}(S)=b_1(S)$ and $h^{1,0}(S)=h^{0,1}(S)$ (cf. \cite[p.~105-106]{GH2014}). So, the assertion follows.
\end{proof}

\subsubsection{Surfaces with $\chi(S) \ge 3$}
The generating function of the \textit{Euler characteristic} $\chi(S^{[n]})$ of $S^{[n]}$ is given as follows (cf. \cite[Equation $(2)$, Theorem 0.1, p.~193]{MR1032930}): 
\begin{equation}\label{26}
    \sum_{n=0}^{\infty}\chi(S^{[n]})q^n=\prod_{n=1}^{\infty}\frac{1}{(1-q^n)^{\chi(S)}}\;.
\end{equation}

By a \textit{$k$-coloured partition of $n$}, we mean a partition of $n$ where each part is assigned one of the given $k$ colours. The total number of $k$-coloured partitions is denoted by $p_k(n)$. Observe that for $k=1$, we recover the usual partition function $p(n)$. The generating series of $p_k(n)$ is given as follows (cf. \cite[First equation, p.~616]{MR4287510}):  
\begin{equation}\label{27}
    \sum_{n=0}^{\infty}p_k(n)q^n=\prod_{n=1}^{\infty}\frac{1}{(1-q^n)^k}\;.
\end{equation}
Hence, following \eqref{26} and \eqref{27}, we find that:
\begin{equation}\label{28}
    \chi(S^{[n]})=p_{\chi(S)}(n).
\end{equation} 

\begin{definition}\label{Defn_colour partition respects majorising order}
 Given ${\bf a}=(n_1,n_2,\ldots,n_r)\in \mathbb{N}^r$, we define 
 \begin{equation*}
     p_k({\bf a}):=\prod_{i=1}^r p_k(n_i),
 \end{equation*}
 where $p_k(n_i)$ is the number of $k$-coloured partitions of $n_i$, $1\leq i \leq r$.
\end{definition}
We now recall a strict inequality involving $p_k({\bf a})$ (cf. \cite[Theorem 1.4]{bringmann}).
\begin{lemma}\label{majorization inequality}
    Let ${\bf a}=(n_1,\ldots,n_r)$ and ${\bf b}=(m_1,\ldots,m_r)$ be two partitions of a positive integer $n$, of same length, such that ${\bf b}\succ {\bf a}$. Then  $p_k({\bf a}) > p_k({\bf b})$, for $k\geq 3$.  
\end{lemma}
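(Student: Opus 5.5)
The statement is cited from \cite[Theorem 1.4]{bringmann}; if a proof is wanted, I would follow the same Robin Hood strategy used in Lemma \ref{lem: majorization lemma}.

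\textbf{Step 1: reduce to a single Robin Hood step.} By Lemma \ref{lem: equiv cond for majorization}, ${\bf a}$ is obtained from ${\bf b}$ by finitely many Robin Hood transformations $R$. Following Case 1 of Lemma \ref{lem: majorization lemma}, a step with $m_j=m_\ell+1$ only permutes the parts, so it leaves $p_k(\cdot)$ unchanged. A step with $m_j>m_\ell+1$ replaces the pair $(m_j,m_\ell)$ by $(m_j-1,m_\ell+1)$. Since ${\bf a}\neq{\bf b}$, at least one step of the second kind occurs. It therefore suffices to prove
\begin{equation*}
p_k(x-1)\,p_k(y+1) > p_k(x)\,p_k(y)\qquad\text{whenever } x>y+1,\ y\ge 1,\ k\ge 3 .
\end{equation*}

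\textbf{Step 2: reduce to log-concavity.} By Lemma \ref{lem: equiv condition for log concavity}, applied with $m=x$ and $\ell=y$, the inequality of Step 1 is equivalent to strict log-concavity of the sequence $\{p_k(n)\}_{n\ge1}$, that is,
\begin{equation*}
p_k(n)^2>p_k(n-1)\,p_k(n+1)\qquad\text{for all } n\ge 2 .
\end{equation*}
The indices involved are all at least $1$, because $y\ge1$ and every part of a partition is positive. So the value $p_k(0)=1$, where log-concavity can fail, is never needed.

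\textbf{Step 3: prove log-concavity.} This is the main obstacle. For $k=1$ the ordinary partition function is not log-concave for small $n$, so the hypothesis $k\ge3$ must genuinely enter. I would argue in two ranges.
\begin{itemize}
\item \emph{Large $n$.} Use the Hardy--Ramanujan/Rademacher-type asymptotic for $p_k(n)$, coming from the modular form $\eta^{-k}$, together with explicit error bounds. These give $\log p_k(n)$ as a concave main term $\pi\sqrt{2kn/3}$ plus lower-order corrections. The second difference of the main term is of order $-n^{-3/2}$, and it dominates the error for $n\ge N_0(k)$.
\item \emph{Small $n$, $k\ge3$.} Handle the finitely many remaining cases using the recursion $n\,p_k(n)=k\sum_{j\ge1}\sigma(j)\,p_k(n-j)$. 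Alternatively, argue by induction on $k$ via the convolution $p_{k+1}=p_k * p$, using the fact that convolution preserves log-concavity for sequences without internal zeros. This requires some initial log-concave base case, which I expect to take as $k=3$ after a direct check.
\end{itemize}
The uniformity in $k$ of the asymptotic error terms is the delicate point. In practice I would rely on the analysis in \cite{bringmann} for that part.
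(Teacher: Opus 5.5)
Your Steps 1 and 2 are correct. They are the paper's proof of Lemma \ref{lem: majorization lemma} with $\frac{n+p}{p}$ replaced by $p_k(n)$; intermediate vectors stay positive because $m_j-1\ge m_\ell\ge 1$. They correctly isolate what is needed, namely $p_k(n)^2>p_k(n-1)p_k(n+1)$ for $n\ge 2$ and $k\ge 3$. The paper gives no argument for this lemma beyond citing \cite[Theorem 1.4]{bringmann}, so all the content is in your Step 3, and that step does not stand on its own.

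\begin{itemize}
\item The asymptotic route gives a threshold $N_0(k)$ for each fixed $k$, which leaves infinitely many finite checks. You acknowledge this and fall back on \cite{bringmann}, which is just the paper's citation again.
\item The induction via $p_{k+1}=p_k*p$ is wrong as stated. Convolution preserves log-concavity only when \emph{both} factors are log-concave without internal zeros, and $p$ is not: for example $p(1)^2=1<2=p(0)p(2)$ and $p(3)^2=9<10=p(2)p(4)$.
\item No ``direct check'' can establish log-concavity of $p_3(n)$ for all $n$.
\end{itemize}

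The induction can be repaired by convolving with $p_3$ instead of $p$. The sequence $\{p_k(n)\}_{n\ge 0}$ is log-concave including $n=1$ for every $k\ge 3$, since $p_k(1)^2-p_k(0)p_k(2)=k(k-3)/2\ge 0$; for $k=3$ this is an equality, $9=9$. Hence the lower-triangular Toeplitz matrices of these sequences are totally nonnegative of order $2$. Write $u=p_k$ and $w=p_{k+3}=u*p_3$. Apply Cauchy--Binet to the $2\times 2$ minor of $w$ with rows $\{n,n+1\}$ and columns $\{0,1\}$. Every summand is nonnegative, and the summand indexed by $\{0,1\}$ gives
\[
w_n^2-w_{n-1}w_{n+1}\;\ge\;p_3(0)^2\bigl(u_n^2-u_{n-1}u_{n+1}\bigr).
\]
So strict log-concavity for $n\ge 2$ passes from $p_k$ to $p_{k+3}$. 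It then suffices to treat $k=3,4,5$, each by fixed-$k$ asymptotics with explicit error bounds plus a finite computation. This removes the uniformity-in-$k$ problem entirely.

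Finally, $k\ge 3$ is sharp for strictness: $p_2(3)^2=100=p_2(2)p_2(4)$. Hence ${\bf b}=(4,2)\succ{\bf a}=(3,3)$ gives $p_2({\bf a})=p_2({\bf b})$.
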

Using Lemma \ref{majorization inequality}, we obtain the following.
\begin{theorem}\label{Classification_same length case_using majorization}
Let ${\bf a}=(n_1,\ldots,n_r)$ and ${\bf b}=(m_1,\ldots,m_r)$ be two partitions of a positive integer $n$, of same length, such that ${\bf b}\succ {\bf a}$.  Then $S^{[{\bf a}]}$ is not isomorphic to $S^{[{\bf b}]}$, whenever $\chi(S)\ge 3$.
\end{theorem}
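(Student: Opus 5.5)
The plan is to use the topological Euler characteristic as the distinguishing invariant, since it is multiplicative under products. An isomorphism of schemes $S^{[{\bf a}]}\cong S^{[{\bf b}]}$ induces an isomorphism on (singular, or $\ell$-adic over $\overline{\F}_q$) cohomology. Hence it would force $\chi(S^{[{\bf a}]})=\chi(S^{[{\bf b}]})$. It therefore suffices to show these two Euler characteristics differ.

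First I would compute them via the Künneth formula, exactly as in \eqref{Poincare polynomial of product is product of Poincare polynomials}. Evaluating the Poincaré polynomial at $t=-1$ gives
\begin{equation*}
\chi(S^{[{\bf a}]})=\prod_{i=1}^r\chi(S^{[n_i]}),\qquad \chi(S^{[{\bf b}]})=\prod_{i=1}^r\chi(S^{[m_i]}).
\end{equation*}
Next I would invoke \eqref{28}, which identifies $\chi(S^{[n_i]})=p_{\chi(S)}(n_i)$. This rewrites the two products as $p_k({\bf a})$ and $p_k({\bf b})$ in the sense of Definition \ref{Defn_colour partition respects majorising order}, with $k=\chi(S)$.

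Finally, since $k=\chi(S)\ge 3$ and ${\bf b}\succ{\bf a}$ with both partitions of the same length, Lemma \ref{majorization inequality} gives $p_k({\bf a})>p_k({\bf b})$. Thus $\chi(S^{[{\bf a}]})>\chi(S^{[{\bf b}]})$, and the two spaces cannot be isomorphic.

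There is no real obstacle here, because the hard combinatorial input is Lemma \ref{majorization inequality}, which we take from \cite{bringmann}. The only points needing care are the following.
\begin{enumerate}
\item The hypothesis $\chi(S)\ge 3$ makes $k$ a positive integer, so \eqref{27} and the coloured-partition interpretation apply.
\item The multiplicativity of $\chi$ holds over $\overline{\F}_q$ as well. This follows from the Künneth formula in $\ell$-adic cohomology, which together with the analogue of \eqref{26} justifies the extension mentioned in the introduction.
\end{enumerate}
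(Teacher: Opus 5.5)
Your proposal is correct and follows the paper's proof essentially verbatim. Both arguments use multiplicativity of $\chi$ via the K\"unneth formula, then the identification $\chi(S^{[n]})=p_{\chi(S)}(n)$ from \eqref{28}, and then the strict inequality of Lemma \ref{majorization inequality}, which together give $\chi(S^{[{\bf a}]})=p_{\chi(S)}({\bf a})>p_{\chi(S)}({\bf b})=\chi(S^{[{\bf b}]})$.
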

\begin{proof}
    Following \eqref{28}, the K\"unneth formula and  Lemma \ref{majorization inequality}, we have: 
    \begin{equation*}
        \begin{split}
            \chi(S^{[{\bf a}]})=\prod_{i=1}^r\chi(S^{[n_i]}) =\prod_{i=1}^r p_{\chi(S)}(n_i)=p_{\chi(S)}({\bf a})> p_{\chi(S)}({\bf b})=\chi(S^{[{\bf b}]}).
        \end{split}
    \end{equation*}
Hence, the assertion follows. 
\end{proof}

  As there are several examples of surfaces $S$ with $b_0(S)=1$ and $b_1(S)=0$ (cf. Table \ref{Table}), we record the following consequence of Theorem \ref{Classification_same length case_using majorization}.

\begin{corollary}\label{corollary_classification using majorization_same length case}
     Let ${\bf a}$ and ${\bf b}$ be two partitions of a given positive integer $n$, of the same length, such that ${\bf b}\succ {\bf a}$.  Then $ S^{[{\bf a}]}$ is not isomorphic to $S^{[{\bf b}]}$ when $b_0(S)=1$ and $b_1(S)=0$.
\end{corollary}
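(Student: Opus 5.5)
The natural route is to reduce the corollary to Theorem \ref{Classification_same length case_using majorization}. That theorem requires $\chi(S)\ge 3$, so it suffices to show that $b_0(S)=1$ and $b_1(S)=0$ force $\chi(S)\ge 3$.

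First compute $\chi(S)$ from the Betti numbers. Poincaré duality on the compact real $4$-manifold $S$ gives $b_4(S)=b_0(S)=1$ and $b_3(S)=b_1(S)=0$. Hence
\[
\chi(S)=\sum_{i=0}^{4}(-1)^i b_i(S)=1-0+b_2(S)-0+1=2+b_2(S).
\]

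Next, show $b_2(S)\ge 1$. Since $S$ is projective, an ample divisor $H$ has self-intersection $H^2>0$. Its class $c_1(\mathcal{O}_S(H))\in H^2(S,\mathbb{Q})$ is therefore nonzero, so $b_2(S)\ge 1$. Over $\overline{\F}_q$ the same argument works in $\ell$-adic cohomology: the cycle class of $H$ has nonzero self-cup-product, so it is nonzero. Combining the two steps gives $\chi(S)\ge 3$.

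Finally, apply Theorem \ref{Classification_same length case_using majorization} with $\chi(S)\ge 3$ and ${\bf b}\succ{\bf a}$. It gives $\chi(S^{[{\bf a}]})>\chi(S^{[{\bf b}]})$, so $S^{[{\bf a}]}\not\cong S^{[{\bf b}]}$. The only step needing any justification is $b_2(S)\ge 1$, and the ample class settles it.
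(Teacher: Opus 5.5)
Your proof is correct and follows the paper's route exactly: you reduce to Theorem~\ref{Classification_same length case_using majorization} by observing that $\chi(S)=2+b_2(S)\ge 3$. The paper just asserts $b_2(S)\ge 1$ for smooth projective surfaces and leaves $\chi(S)=2+b_2(S)$ implicit, whereas you supply both the Poincar\'e-duality computation and the ample-class argument (also over $\overline{\F}_q$).
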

\begin{proof}
    Since $b_2(S)\ge 1$ for any smooth projective surface $S$, the result follows directly from Theorem \ref{Classification_same length case_using majorization}.
\end{proof}
\begin{remark}\label{Remark_validity over alg closed fields over finite fields}
Theorem \ref{thm: main theorem for diff lengths}, Theorem \ref{thm: same length with b0(S) ge 1}, Theorem \ref{thm: main theorem for same length for b_0(S)=1 and b_1(S)=0}, and Theorem \ref{Classification_same length case_using majorization} hold for any smooth projective surface over $\overline{\F}_q$ as well, where $\F_q$ is the finite field with $q$ elements, as so do the generating function for $b_i(S^{[n]})$ and $\chi(S^{[n]})$ (cf. \cite[Theorem 0.1, Equation 1(b) and Equation (2) p.~193]{MR1032930}).
\end{remark}

 \section*{Declaration of competing interest}
 There is no conflict of interest.

\section*{Acknowledgements}
   The first-named author would like to thank the Indian Institute of Technology Madras for financial support (Office order No.F.ARU/R10/IPDF/2024). The second-named author received financial support from the Prime Minister's Research Fellowship (PMRF, ID: 2503482). The authors would like to thank A. Dey, C. Gangopadhyay, L. G\"ottsche, D. S. Nagaraj and C. Voisin for their comments and suggestions about this article.

\end{document}